\newtheorem{thm}{Theorem}[section]
\newtheorem{cor}[thm]{Corollary}
\newtheorem{lem}[thm]{Lemma}
\newtheorem{prop}[thm]{Proposition}
\newtheorem{question}[thm]{Questions}
\newtheorem{defn}[thm]{Definition}
\newtheorem{example}[thm]{Example}
\newproof{proof}{Proof}
\def\N{{\mathbb{N}}}
\def\Q{{\mathbb{Q}}}
\def\Z{{\mathbb{Z}}}
\newcommand{\bu}{\mathbf{u}}
\newcommand{\bv}{\mathbf{v}}
\newcommand{\bi}{\mathbf{i}}
\newcommand{\bY}{\mathbf{Y}}
\newcommand{\bZ}{\mathbf{Z}}
\renewcommand{\em}{\textit}
\DeclareMathOperator{\Apr}{Apr}
\DeclareMathOperator{\adj}{adj}
\def\uAlg#1#2{\kappa[\![\bu^{#1}]\!]/\kappa[\![\bu^{#2}]\!]}
\def\uRing#1{\kappa[\![\bu^{#1}]\!]}
\def\vRing#1{\kappa[\![\bv^{#1}]\!]}
\def\2uRing#1#2{\kappa[\![\bu^{#1},\bu^{#2}]\!]}
\def\twovRing#1#2{\kappa[\![\bv^{#1},\bv^{#2}]\!]}
\def\threevRing#1#2#3{\kappa[\![\bv^{#1},\bv^{#2},\bv^{#3}]\!]}
\def\3uRing#1#2#3{\kappa[\![\bu^{#1},\bu^{#2},\bu^{#3}]\!]}
\def\4uRing#1#2#3#4{\kappa[\![\bu^{#1},\bu^{#2},\bu^{#3},\bu^{#4}]\!]}
\def\5uRing#1#2#3#4#5{\kappa[\![\bu^{#1},\bu^{#2},\bu^{#3},\bu^{#4},\bu^{#5}]\!]}
\def\6uRing#1#2#3#4#5#6{\kappa[\![\bu^{#1},\bu^{#2},\bu^{#3},\bu^{#4},\bu^{#5},\bu^{#6}]\!]}
\begin{document}

\bibliographystyle{elsarticle-num}

\begin{frontmatter}

\title{Factorizations in Numerical Semigroup Algebras}

\author{I-Chiau Huang }
\ead{ichuang@math.sinica.edu.tw}
\address{Institute of Mathematics, Academia Sinica,
	6F, Astronomy-Mathematics Building,
	No. 1, Sec. 4, Roosevelt Road, Taipei 10617, Taiwan, R.O.C.}

\author{Raheleh Jafari \fnref{fn1}\corref{cor2}}
\ead{rjafari@ipm.ir}
\address{Mosaheb Institute of Mathematics, Kharazmi University, and 	
	School of Mathematics, Institute for
	Research in Fundamental Sciences (IPM), P. O. Box 19395-5746,
	Tehran, Iran.}

\cortext[cor2]{Corresponding author}
\fntext[fn1]{Raheleh Jafari was in part supported by a grant from IPM (No. 95130120).}

\begin{abstract}
We study a numerical semigroup ring as an algebra over another numerical semigroup ring.
The complete intersection property of numerical semigroup algebras is investigated using
factorizations of monomials into minimal ones. The goal is to study whether a flat rectangular
algebra is a complete intersection. Along this direction, special types of algebras generated by
few monomials are worked out in detail.
\end{abstract}

\begin{keyword}
Ap\'{e}ry monomial, complete intersection, factorization, flat, numerical semigroup, rectangle.

\MSC[2010]  13H10  \sep 14H20 \sep 20M25
\end{keyword}

\end{frontmatter}



\section{Introduction}


Despite its simple definition, numerical semigroups  provide a fertile ground for research
\cite{GR}. Following \cite{HK}, we investigate algebraic properties of an exponential
counterpart of numerical semigroups from a relative point of view. Throughout this paper, 
$\kappa$ is a field. A numerical semigroup ring is a complete local domain
of the form $\kappa[\![\bu^{s_1},\ldots,\bu^{s_n}]\!]$, where $s_1, \ldots,s_n$ are 
positive \em{rational} numbers. Note that, in the literature, 
$s_1, \ldots,s_n$ are often assumed to be relatively prime positive integers. The objects we study are local 
homomorphisms $R\to R'$ of numerical semigroup rings. Through the homomorphism, 
we have an algebra $R'$ over the coefficient ring $R$, which we denote by $R'/R$. 
Within Cohen-Macaulay homomorphisms, we are mainly interested in the property of 
complete intersection. We remark that the classical study of numerical semigroup rings is 
a special case of our relative situation. Indeed, a numerical semigroup ring 
can be considered as a numerical semigroup algebra over a Noether normalization.

Following the terminology of \cite{EGA-24,EGA}, 
a numerical semigroup algebra $R'/R$ is called 
\em{Cohen-Macaulay} (resp. \em{complete intersection}) if the homomorphism $R\to R'$
is flat and its fibers are Cohen-Macaulay (resp. complete intersection) rings. There are 
only two fibers of the homomorphism. Both are zero dimensional and hence Cohen-Macaulay. 
Therefore the Cohen-Macaulay property for a numerical semigroup algebra is  simply equivalent to flatness. The notion of complete intersection has been extended to arbitrary 
homomorphisms of Noetherian rings \cite{A}. In this paper, flatness is included as a part of
the definition of complete intersection so that the hierarchy of complete intersection inside 
Cohen-Macaulay automatically holds.

In the context of numerical semigroup algebras, our definition of complete intersection takes
the form directly from the historical origin. Recall that, in algebraic geometry, a $d$-dimensional 
variety in the $n$-dimensional ambient space is complete intersection if it can be cut out 
by $n-d$ hypersurfaces. For a flat numerical semigroup algebra $R'/R$, we consider a
local surjective $R$-algebra homomorphism 
\[
\hat{\pi}\colon R[\![Y_1,\ldots,Y_n]\!]\to R'. 
\]
The power series ring $R[\![Y_1,\ldots,Y_n]\!]$ has dimension $n+1$. The kernel of $\hat{\pi}$
needs at least $n$ generators. If the kernel can be generated by $n$ elements, the flat algebra
$R'/R$ is called \em{complete intersection}. In the language of \cite{A}, 
\[
R\to R[\![Y_1,\ldots,Y_n]\!]\to R'
\]
is a Cohen factorization of $R\to R'$. If the kernel of $\hat{\pi}$ is generated by a regular 
sequence, the homomorphism $R\to R'$ is called complete intersection at the maximal ideal of 
$R'$. If $R\to R'$ is complete intersection at the maximal ideal, it is also complete 
intersection at the zero ideal \cite{T}. In \cite{A}, a homomorphism is called complete intersection if
it is complete intersection at all prime ideals. Note that the kernel of $\hat{\pi}$ is generated by a 
regular sequence if and only if it is generated by $n$ elements. Hence our definition of complete 
intersection for flat numerical semigroup algebras agrees with that of \cite{A}, and also 
with that of \cite{EGA}.

There are $n$ candidates of the form 
$Y_i^{\beta_i}-\bu^{\beta_{i0}}Y_1^{\beta_{i1}}\cdots Y_n^{\beta_{in}}$
for the set of generators of the kernel of $\hat{\pi}$, where 
$\bu^{\beta_{i0}}\in R$ and $\beta_{ii}=0$. For prescribed numerical invariants $\beta_i$, the 
image of the set $\{Y_1^{s_1}\cdots Y_n^{s_n}\,|\, 0\leq s_i<\beta_i\}$ in $R'$ can be used to 
study the complete intersection property of $R'/R$. Each monomial in the set describes a
factorization of its image. If an element of $R'$ has factorizations from two distinct monomials
in the set, the difference of the monomials creates an extra generator for the kernel of $\hat{\pi}$. 
This is the idea of $\alpha$-rectangular, $\beta$-rectangular and $\gamma$-rectangular sets 
of Ap\'{e}ry numbers in the classical case \cite{DMS-2013, DMS}.  
In the  relative situation, Ap\'{e}ry numbers are 
generalized to Ap\'{e}ry monomials; the role of these numerical invariants is replaced by the 
notion of rectangles to emphasize the ``shape'' rather than the ``size'' of the set of Ap\'{e}ry 
monomials. Investigating factorizations of a numerical semigroup algebra, 
we obtain a main result 
asserting that a flat numerical semigroup algebra is complete intersection if it has a nonsingular 
rectangle. See Theorem~\ref{thm:main}.

This paper is organized as follows. Section~\ref{sec:apr} starts with our broader
definition of numerical semigroups. Notions related to Ap\'{e}ry monomials are defined.
Section~\ref{sec:flat} reviews flatness of numerical semigroup
algebras. A new criterion for flatness is given in terms of the number of Ap\'{e}ry monomials. In
Section~\ref{sec:rectangle}, we define rectangles with examples to clarify the notion.
Sufficient conditions are given for a flat numerical semigroup algebra to be
complete intersection.
Section~\ref{sec:234} consists of a detailed study of rectangular numerical
semigroup algebras generated by few monomials. For flat algebras generated by three monomials,
we show that rectangles must be non-singular with a special type resembling free numerical semigroups. For flat algebras generated by four monomials, we find a subclass of rectangular algebras,
which consists only of complete intersection algebras.


\section{Ap\'{e}ry Monomials}\label{sec:apr}


In the literature, a numerical semigroup is a submonoid of the set $\N$ of non-negative integers, 
whose greatest common divisor is $1$. The condition on the greatest common divisor is equivalent 
to the statement that the complement of the semigroup in $\N$ consists of finitely 
many elements. In this classical definition, the multiplicity of the numerical semigroup 
is recognized as the smallest non-zero number in the semigroup. To study numerical semigroups,
other submonoids of $\N$ appear without the condition on greatest common divisors.
It is also observed that a numerical semigroup divided by its multiplicity naturally occurs in
the study of the tangent cones of the numerical semigroups \cite{H2015, H2016}. 
We will see that numerical semigroups in the following broader sense give rise to many flat 
numerical semigroup algebras.
\begin{defn}[numerical semigroup]
	A numerical semigroup is a monoid generated by finitely many positive rational numbers.
\end{defn}
Let $S$ be a numerical semigroup. The numerical semigroup ring $R:=\uRing{S}$ in the 
variable $\bu$ consists of power series $\sum_{s\in S}a_s\bu^s$ with $a_s\in\kappa$.   
An element $\bu^s\in R$ is called a \em{monomial} of $R$. Note that, in the notation 
$\uRing{S}$ for $R$, the numerical semigroup ring comes with a choice of variable $\bu$. 
We denote $S=\log_\bu R$. We shall allow ourself to multiply $S$ by a positive rational 
number $t$ and study the numerical semigroup ring $\vRing{tS}$. Easily tracked from the 
relation $\bu=\bv^t$, two rings $\uRing{S}$ and $\vRing{tS}$ are essentially the same.
If a numerical semigroup $S$ is generated by $s_1,\ldots,s_n$, we also write the numerical 
semigroup ring $\uRing{S}$ in terms of the chosen variable $\bu$ as 
$\kappa[\![\bu^{s_1},\ldots,\bu^{s_n}]\!]$.

Let $S$ and $S'$ be numerical semigroups. For a positive rational number $t$ satisfying
$tS\subset S'$, the numerical semigroup ring $R'=\vRing{S'}$ has an algebra structure over 
$R=\uRing{S}$ through the relation $\bu=\bv^t$. We denote the algebra by $R'/R$ and call 
it a \em{numerical semigroup algebra} with $R$ as its \em{coefficient ring}. Note that there 
are different ways to embed $R$ into $R'$. By scaling the semigroups, we may work in the 
situation where $R$ and $R'$ share the same variable $\bu$. For such a situation, we say 
$R'/R$ is a numerical semigroup algebra in the variable $\bu$. If $S'$ is generated by $S$ 
together with rational numbers $s_1,\ldots,s_n$, the numerical semigroup ring $R'$ is also 
denoted by $R[\![\bu^{S'}]\!]$ or $R[\![\bu^{s_1},\ldots,\bu^{s_n}]\!]$ to indicate its 
$R$-algebra structure. In this paper, the notation $R[\![\bu^{s_1},\ldots,\bu^{s_n}]\!]$ will
always mean a numerical semigroup algebra with the numerical semigroup ring $R$ as 
its coefficient ring.

The classical study of numerical semigroup rings fits in our relative situation by choosing 
Noether normalizations. More precisely, for a numerical semigroup ring $\uRing{S}$, we 
take an element $s\in S$ and obtain a Noether normalization $\kappa[\![\bu^s]\!]$ of 
$\uRing{S}$. The numerical semigroup ring $\uRing{S}$ is Cohen-Macaulay, Gorenstein 
or complete intersection if and only if the algebra $\uAlg{S}{s}$ has the same property \cite{HK}.

Ap\'{e}ry numbers are among the most important tools to study numerical semigroups. 
The notion can be extended to the relative situation.
\begin{defn}[Ap\'{e}ry monomial]
For a numerical semigroup algebra $R'/R$, a monomial $p$ of $R'$ is called an
Ap\'{e}ry monomial if, whenever there are monomials $p_1$ of $R$ and $p_2$ 
of $R'$ such that $p_1p_2 =p$, then $p_1=1$. 
\end{defn}
In other words, a monomial is Ap\'{e}ry if it is not divisible by any non-trivial coefficient. 
We denote the set of Ap\'{e}ry monomials of $R'/R$ by $\Apr(R'/R)$.
\begin{example}
$\Apr(\2uRing{3}{5}/\uRing{6})=\{\bu^0,\bu^3,\bu^5,\bu^8,\bu^{10},\bu^{13}\}$.
\end{example}
\begin{example}
$\Apr(\2uRing{3}{5}/\2uRing{6}{8})=\{\bu^0,\bu^3,\bu^5,\bu^{10}\}$.
\end{example}

Ap\'{e}ry numbers are the logarithmic form of Ap\'{e}ry monomials. Let $R'/R$ be a numerical semigroup 
algebra in the variable $\bu$. Let $S'=\log_\bu R'$ and $S=\log_\bu R$. Then an element $s\in S'$ such that
$\bu^s\in\Apr(R'/R)$ is called an \em{Ap\'{e}ry number} of $S'$ with respect to $S$.
In the classical case, recall that an Ap\'{e}ry number of a numerical semigroup $S'$ with respect 
to an element $m\in S'$ is defined to be an element $s\in S'$ such that $s-m\not\in S'$. In our 
terminology, these numbers are exactly Ap\'{e}ry numbers of $S'$ with respect to the numerical semigroup $m\N$.

For a numerical semigroup algebra $R'/R$, an Ap\'{e}ry monomial $p$ not equal to $1$ is called a 
\em{minimal monomial} if, whenever there are monomials $p_1$ and $p_2$ of $R'$ 
such that $p_1p_2 =p$, then one of $p_1$ and $p_2$ has to be $p$. In other words, 
minimal monomials of a numerical semigroup algebra are minimal elements among 
non-trivial Ap\'{e}ry monomials with respect to the partial order given by divisions. Let 
$\bu^{s_1},\ldots,\bu^{s_n}$ be the minimal monomials of $R'/R$, then 
$R'=R[\![\bu^{s_1},\ldots,\bu^{s_n}]\!]$. 

The following two notions are central in the
computational aspect of numerical semigroup algebras.
\begin{defn}[representation]
Let $R'/R$ be a numerical semigroup algebra in the variable $\bu$.
A representation of a monomial $\bu^s$ of $R'/R$ is an expression
$\bu^s=\bu^{s_0}\bu^w$, where $\bu^{s_0}\in R$ and $\bu^w\in\Apr(R'/R)$. 
\end{defn}
\begin{defn}[factorization]\label{fact}
Let $R'/R$ be a numerical semigroup algebra in the variable $\bu$. Let 
$\bu^{s_1},\ldots,\bu^{s_n}$ be the minimal monomials of $R'/R$. An expression 
$\bu^s=\bu^{s_0}(\bu^{s_1})^{a_1}\cdots(\bu^{s_n})^{a_n}$ for a monomial $\bu^s$ 
of $R'$, where $\bu^{s_0}\in R$ and $a_i\in\N$, is  called a \em{factorization} of 
$\bu^s$.
\end{defn}
Note that representations and factorizations always exist for monomials in a 
numerical semigroup algebra.


\section{Flatness}\label{sec:flat}


Flatness is a homological notion for modules. It also has a characterization in terms of 
relations. Roughly speaking, a module is flat if all its module relations come from the relations 
of the underlying ring. See \cite[Theorem 7.6]{M} for the precise statement. For a finitely generated
module over a Noetherian local ring, free and flat are equivalent properties. To emphasize the 
computational aspect of numerical semigroup algebras, we interpret this fact again as in \cite{HK}
using the flatness criterion by relations.
\begin{lem}\label{lem:ur}
	A numerical semigroup algebra is flat if and only if every monomial has a unique representation.
\end{lem}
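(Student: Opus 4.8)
The plan is to use the flatness criterion by relations in the form that, over a Noetherian local ring, a finitely generated module is flat if and only if it is free. Write $R = \uRing{S}$ and $R' = \uRing{S'}$ with $tS \subset S'$, and recall that $R'$ is generated as an $R$-module by the monomials $\bu^w$ with $\bu^w \in \Apr(R'/R)$, since every monomial $\bu^s$ of $R'$ has a representation $\bu^s = \bu^{s_0}\bu^w$ with $\bu^{s_0} \in R$ and $\bu^w \in \Apr(R'/R)$. (One should note $R'$ is a finitely generated $R$-module, which follows because $S'\setminus S$ is finite after scaling, so $\Apr(R'/R)$ is finite.) Thus the Ap\'{e}ry monomials form a generating set, and flatness is equivalent to this generating set being a \emph{basis}, i.e.\ to the $R$-linear independence of the monomials $\{\bu^w : \bu^w \in \Apr(R'/R)\}$.

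The heart of the argument is then to translate ``$R$-linear independence of the Ap\'{e}ry monomials'' into ``every monomial has a unique representation.'' For the forward direction, if some monomial $\bu^s$ had two distinct representations $\bu^s = \bu^{s_0}\bu^{w} = \bu^{s_0'}\bu^{w'}$ with $\bu^w \ne \bu^{w'}$ in $\Apr(R'/R)$, then $\bu^{s_0}\bu^w - \bu^{s_0'}\bu^{w'} = 0$ is a nontrivial $R$-linear relation among distinct basis elements (the coefficients $\bu^{s_0}, \bu^{s_0'} \in R$ are nonzero), contradicting freeness; hence flatness forces uniqueness of representations. Conversely, suppose every monomial has a unique representation, and suppose $\sum_{i} r_i \bu^{w_i} = 0$ is an $R$-linear relation among distinct Ap\'{e}ry monomials $\bu^{w_i}$, with each $r_i = \sum_{s \in S} c_{i,s}\bu^s \in R$. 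Expanding, the left side is a power series in $\bu$, and the coefficient of each power $\bu^s$ must vanish. The key point is that each product $\bu^{s}\bu^{w_i}$ (for $s \in S$) is precisely a representation of the monomial $\bu^{s+tw_i}$ of $R'$, and by uniqueness of representations, distinct pairs $(s, w_i)$ with $\bu^{w_i}$ Ap\'{e}ry give distinct monomials $\bu^{s + t w_i}$ of $R'$ --- unless $w_i = w_j$, which is excluded. Therefore no cancellation can occur between terms coming from different $i$, and the coefficient of $\bu^{s + tw_i}$ in the relation is exactly $c_{i,s}$; setting it to zero for all $s$ gives $r_i = 0$. Hence the Ap\'{e}ry monomials are $R$-linearly independent, so $R'$ is $R$-free, hence flat.

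I would organize the write-up so that the bridge lemma ``distinct representations yield distinct monomials of $R'$'' is isolated and stated cleanly, since it is really the combinatorial core: a representation $\bu^s = \bu^{s_0}\bu^w$ is determined by the pair $(s_0, w) \in S \times \log_\bu\Apr(R'/R)$, and two such pairs produce the same element of $R'$ if and only if $s_0 + tw = s_0' + tw'$ in $S'$. Uniqueness of representations says exactly that the addition map $S \times \log_\bu\Apr(R'/R) \to S'$ is injective; surjectivity is automatic since representations exist. So the lemma is equivalent to saying this map is a bijection, and $R$-freeness of $R'$ is the ring-theoretic shadow of that bijection.

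The main obstacle I anticipate is the bookkeeping in the converse direction: making sure that when one expands a general $R$-linear relation $\sum r_i \bu^{w_i}$ into a single power series in $\bu$ and reads off coefficients, the grouping of terms by the value $s + tw_i$ is done correctly and that uniqueness of representations is invoked at exactly the right place to rule out cross-cancellation. This is routine but needs care, because $r_i$ itself is an infinite sum and one must argue at the level of individual coefficients $\bu^s$ of $r_i$ rather than treating $r_i$ as a single opaque element of $R$. The finite-generation remark (that $R'$ is a finite $R$-module, so that ``free = flat'' applies) should also be stated explicitly rather than assumed.
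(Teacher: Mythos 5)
Your ``unique representation $\Rightarrow$ flat'' direction is essentially the paper's (the paper disposes of it in one sentence: unique representations make the Ap\'{e}ry monomials an $R$-basis, so the algebra is free), and your coefficient bookkeeping for it is correct. The other direction, however, has a genuine gap, and it sits exactly where the paper does real work. You reduce ``flat $\Rightarrow$ unique representation'' to the assertion that flatness is equivalent to the Ap\'{e}ry monomials being a \emph{basis}, and then read two distinct representations as a nontrivial relation ``among distinct basis elements.'' But flatness only gives freeness (granted module-finiteness), and a generating set of a free module can perfectly well satisfy nontrivial relations; that the Ap\'{e}ry monomials in particular must be a basis is precisely the point at issue, so as written the step is circular --- note that you never actually use the Ap\'{e}ry property in this direction. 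To close the gap you need it: every element of $\mathfrak{m}_R R'$ is a series supported on non-Ap\'{e}ry monomials, hence distinct Ap\'{e}ry monomials are $\kappa$-linearly independent in the fiber $R'/\mathfrak{m}_R R'$, so they form a \emph{minimal} generating set; a minimal generating set of a finitely generated free module over a local ring is a basis, and only then does your relation give a contradiction. The paper avoids this route altogether: it applies the equational criterion of flatness (\cite[Theorem 7.6]{M}) directly to the relation $\bu^{t_1}\bu^{w_1}-\bu^{t_2}\bu^{w_2}=0$, deduces that the elements $f_{1j}$ have vanishing constant term, and then uses Ap\'{e}ry-ness of $\bu^{w_1}$ to kill its coefficient --- the same underlying mechanism, but with no appeal to finiteness or to flat $=$ free.

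Separately, your finiteness remark is false as stated: $S'\setminus S$ is in general infinite even after scaling (already for $\kappa[\![\bu^3,\bu^5]\!]/\kappa[\![\bu^6]\!]$, the paper's first example, infinitely many exponents lie in $S'\setminus S$). What is true, and what you need both for module-finiteness and for grouping an arbitrary power series into a finite $R$-combination of Ap\'{e}ry monomials, is that $\Apr(R'/R)$ is finite: scale so that $S\subset S'\subset\N$, fix $0\neq s\in S$; every sufficiently large element of $S'$ (a large multiple of the gcd of $S'$) remains in $S'$ after subtracting $s$, so the Ap\'{e}ry numbers are bounded. With these two repairs your argument becomes a correct, if longer, alternative to the paper's proof.
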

\begin{proof}
	If every monomial has a unique representation, the numerical semigroup algebra is free
	and thus flat. Assume that the algebra $\uAlg{S'}{S}$ is flat. Consider two different representations
	$\bu^{t_1}\bu^{w_1}=\bu^{t_2}\bu^{w_2}$, where $\bu^{t_1},\bu^{t_2}$ are coefficients and
	$\bu^{w_1},\bu^{w_2}$ are Ap\'{e}ry. Assume that $t_1<t_2$. Applying \cite[Theorem 7.6]{M}
	for the flat algebra, there are elements $f_{ij}\in\uRing{S}$ and $g_j\in\uRing{S'}$ such that
	\begin{eqnarray}
	\bu^{t_1}f_{1j}-\bu^{t_2}f_{2j}&=&0,\label{eq:510891}\\
	f_{i1}g_1+\cdots+f_{in}g_n&=&\bu^{w_i},\label{eq:510772}
	\end{eqnarray}
	where $i=1,2$ and $1\leq j\leq n$ for some $n$. From (\ref{eq:510891}), we see that the
	constant term of $f_{1j}$ vanishes for each $j$. Since $\bu^{w_1}$ is an Ap\'{e}ry monomial,
	this implies that the coefficient of
	$f_{11}g_1+\cdots+f_{1n}g_n$ at $\bu^{w_1}$ vanishes, contradicting  (\ref{eq:510772}).
	\qed
\end{proof}
In particular, Ap\'{e}ry monomials form an $R$-module basis of a flat numerical semigroup algebra over $R$.
\begin{example}\label{ex:nonflat}
The algebra $\uRing{}/\2uRing{2}{3}$ 
has two Ap\'{e}ry monomials $\bu^0$ and $\bu$. The algebra
is not flat, since $\bu^{3}$ has different representations $\bu^2\bu$ and $\bu^3\bu^0$. 
\end{example}
\begin{example}\label{ex:flatclassical}
The algebra $\2uRing{2}{3}/\uRing{2}$ has two Ap\'{e}ry monomials $\bu^0$ and $\bu^3$. 
All monomials with even exponents belong to the coefficient ring $\uRing{2}$, and all monomials in $\2uRing{2}{3}$ with odd exponents are uniquely represented as $\bu^{2t}\bu^3$ for some integer $t\geq 0$. By Lemma~\ref{lem:ur}, the algebra is flat.
\end{example}
\begin{example}\label{ex:flatglue}
The algebra $\4uRing{12}{14}{16}{35}/\2uRing{12}{16}$ has four Ap\'{e}ry monomials $\bu^0,\bu^{14},\bu^{35},\bu^{49}$. Since the difference of any two distinct elements in $\{0,14,35,49\}$ is not the difference of any two elements in the numerical semigroup $\langle 12,16\rangle$, any monomial of the algebra has a unique representation. The algebra is flat by Lemma~\ref{lem:ur}.
\end{example}

Example~\ref{ex:flatclassical} is part of a general phenomenon in the classical case.
If the coefficients of a numerical semigroup algebra provide a Noether normalization,
then the algebra is flat \cite[Corollary 2.2]{HK}. 
Example~\ref{ex:flatglue} is a general phenomenon about gluing. Let $S$ and $T$ be numerical semigroups generated by integers, $q\in S$ and $p\in T$ be relatively prime numbers. 
Recall that the numerical semigroup $pS+qT$ is called a \em{gluing} of $S$ and $T$, if $p$ and $q$ are not in the minimal set of generators of $T$ and $S$, respectively. The algebra $\kappa[\![\bu^{pS+qT}]\!]$
over $\kappa[\![\bu^{pS}]\!]$ or over $\kappa[\![\bu^{qT}]\!]$ is flat \cite[Proposition 2.9]{HK}. 
See Example~\ref{ex:22} for another flat numerical semigroup algebra beyond these
general phenomena.

In this section, we provide two more criteria  for flatness:
A criterion counts Ap\'{e}ry monomials and another is expressed in terms of the set
\[
\Delta_S(S'):=\{a_1-a_2 \,|\, \text{$\bu^{a_1},\bu^{a_2}\in\Apr(\uAlg{S'}{S})$ and  $a_1\geq a_2$}\}
\]
for numerical semigroups $S\subset S'$.

\begin{prop}\label{flat}
	Let $\uAlg{S'}{S}$ be a numerical semigroup algebra, where $S'$ and $S$ are subsets of $\N$
	with greatest common divisors $d'$ and $d$ respectively. There are at least $d/d'$
	Ap\'{e}ry monomials. The following conditions are equivalent.
	\begin{itemize}
		\item The algebra is flat.
		\item The algebra has $d/d'$ Ap\'{e}ry monomials.
		\item $\Delta_S(S')\cap (d/d')\Z\subset S$.
	\end{itemize}
	In particular, if the algebra is flat, then $S=S'\cap (d/d')\Z$.
\end{prop}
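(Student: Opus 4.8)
The first move I would make is a harmless rescaling of the variable so that $d'=1$: this changes neither the algebra nor its set of Ap\'{e}ry monomials, and afterwards $d/d'=d$, so I may write $e:=d$. Then $S'$ is a classical numerical semigroup, so it omits only finitely many non-negative integers and in particular meets every residue class modulo $e$, while every element of $S$ is a multiple of $e$. The engine of the whole proof is one elementary observation about translation by $S$: for $a,b\in S'$ one has $a\equiv b\pmod e$ if and only if $a+s=b+s'$ for some $s,s'\in S$. The forward direction is immediate since $s,s'\in e\Z$; for the converse, $a-b\in e\Z=S-S$ (a submonoid of $\N$ with $\gcd$ equal to $e$ already realizes its subgroup $e\Z$ as a set of differences), so $a-b=s'-s$ with $s,s'\in S$, and then $a+s=b+s'$ lies in $S'$ because $S\subset S'$.

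Next I would prove the lower bound and pass from counting Ap\'{e}ry monomials to counting residues. For $r\in\{0,\dots,e-1\}$ let $w_r$ be the least element of $S'$ congruent to $r$ modulo $e$ (it exists because $S'$ has finite complement); then $\bu^{w_r}$ is Ap\'{e}ry, since a factorization $w_r=s_0+w'$ with $0\neq s_0\in S$ would give $w'\in S'$, $w'\equiv r\pmod e$ and $w'<w_r$. As the $w_r$ have pairwise distinct residues, there are at least $e$ Ap\'{e}ry monomials. Furthermore, any representation $\bu^s=\bu^{s_0}\bu^w$ satisfies $w\equiv s\pmod e$, so every residue class modulo $e$ contains at least one Ap\'{e}ry monomial; hence there are exactly $e$ Ap\'{e}ry monomials precisely when each residue class contains exactly one.

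I would then close the cycle of equivalences. If the algebra is flat, representations are unique by Lemma~\ref{lem:ur}; if $\bu^{a}\neq\bu^{b}$ were Ap\'{e}ry monomials with $a\equiv b\pmod e$, the observation above would give $s,s'\in S$ with $a+s=b+s'$, so $\bu^{a}\bu^{s}=\bu^{b}\bu^{s'}$ would be two representations of one monomial with distinct Ap\'{e}ry parts, a contradiction; hence each residue class carries exactly one Ap\'{e}ry monomial, i.e. there are exactly $e$ of them. Conversely, if there are exactly $e$ Ap\'{e}ry monomials, pigeonhole forces one per residue class, and then in any representation $\bu^s=\bu^{s_0}\bu^w$ the factor $\bu^w$ must be the unique Ap\'{e}ry monomial in the class of $s$ and $\bu^{s_0}=\bu^{s-w}$ is thereby determined, so representations are unique and the algebra is flat. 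Finally, ``one Ap\'{e}ry monomial per class'' is equivalent to $\Delta_S(S')\cap e\Z\subset S$: if that inclusion holds, two distinct Ap\'{e}ry monomials $\bu^{a_1},\bu^{a_2}$ with $a_1>a_2$ in a common class would give $0<a_1-a_2\in\Delta_S(S')\cap e\Z\subset S$, contradicting that $\bu^{a_1}$ is Ap\'{e}ry; and if each class carries a unique Ap\'{e}ry monomial, then any $a_1-a_2\in\Delta_S(S')\cap e\Z$ with $a_1\geq a_2$ has $a_1$ and $a_2$ in a common class, hence $a_1=a_2$ and $a_1-a_2=0\in S$.

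For the concluding equality, assume flatness. The inclusion $S\subset S'\cap e\Z$ is automatic, while $S'\cap e\Z$ is exactly the residue class of $0$, whose unique Ap\'{e}ry monomial is $\bu^0$; thus every $s\in S'\cap e\Z$ has a representation $\bu^s=\bu^{s_0}\bu^0$ with $s_0\in S$, forcing $s=s_0\in S$. The one step where I expect genuine content rather than bookkeeping is the residue observation of the first paragraph together with the classical fact that $S'$ meets every residue class modulo $e$; once those are in hand, everything else is a pigeonhole count played against Lemma~\ref{lem:ur}.
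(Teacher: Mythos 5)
Your proof is correct and follows essentially the same route as the paper's: rescale so that $d'=1$, note that every congruence class modulo $d$ contains at least one Ap\'{e}ry number, and use Lemma~\ref{lem:ur} to identify flatness with the statement that each class contains exactly one, the third condition and the final equality $S=S'\cap d\Z$ being restatements of this. The only difference is that you spell out details the paper leaves implicit (the least-element construction of an Ap\'{e}ry number in each class and the fact that the differences of $S$ realize all of $d\Z$), which does not change the argument.
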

\begin{proof}
	Dividing every number in $S'$ by $d'$, we may assume that $d'=1$. With this assumption,
	there exists an Ap\'{e}ry number in each congruence class modulo $d$. So totally there 
	are at least $d$ Ap\'{e}ry monomials. If there are exactly $d$ Ap\'{e}ry monomials, 
each congruence class consists of exactly one Ap\'{e}ry monomial. 
	Consider two representations $\bu^{s_1}\bu^{w_1}=\bu^{s_2}\bu^{w_2}$, where 
	$\bu^{s_1},\bu^{s_2}$ are coefficients and $\bu^{w_1},\bu^{w_2}$ are Ap\'{e}ry. Since 
	$s_1$ and $s_2$ are both divisible by $d$, the exponents $w_1$ and $w_2$ are in the
	same congruence class. Hence $\bu^{w_1}=\bu^{w_2}$. In other words, every monomial 
	has a unique representation. By Lemma~\ref{lem:ur}, the algebra is flat. If there are more 
	than $d$ Ap\'{e}ry monomials, two different Ap\'{e}ry numbers $w_1$ and $w_2$ are in 
	the same congruence class. Say $w_1>w_2$. Choose $s_1$ and $s_2$ in $S$ large 
	enough so  that $w_1-w_2=s_2-s_1$. Then we have two different representations 
	$\bu^{s_1}\bu^{w_1}=\bu^{s_2}\bu^{w_2}$ for a monomial. Consequently, the algebra is not flat.
	
	The inclusion $\Delta_S(S')\cap d\Z\subset S$ is just another way to state that each
	congruence class contains exactly one Ap\'{e}ry number. The condition that there is only
	one Ap\'{e}ry number congruent to $0$ modulo $d$ can be stated as $S'\cap d\Z=S$,
	which is therefore a necessary condition for flatness. \qed
\end{proof}

\begin{example}\label{ex:22}
	For $\bu=\bv^6$, the algebra $\twovRing{4}{9}/\2uRing{2}{3}$ is flat, since it has six Ap\'{e}ry monomials
	$1,\bv^4,\bv^8,\bv^9,\bv^{13},\bv^{17}$. Note that the numerical semigroup $\langle 4,9\rangle$ can not be written as a gluing of $\langle 2,3\rangle$ and another numerical semigroup, because the semigroup obtained from such a gluing needs at least three generators \cite[Lemma 9.8]{GR}.
\end{example}

For a numerical semigroup algebra
$R[\![\bu^{s_1},\ldots,\bu^{s_n}]\!]$, we may choose $m\in\N$ such that $ms_i\in\log_\bu R$ for all $i$.
Therefore $R[\![\bu^{s_1},\ldots,\bu^{s_n}]\!]$ can be considered as the algebra obtained from $R$
by joining $m$-th roots of the monomials $\bu^{ms_1},\ldots,\bu^{ms_n}$ in $R$.
We may apply Proposition~\ref{flat} to the case that only one root is joined.

\begin{cor}\label{cor:f1}
Assume that $\log_\bu R\subset\N$ and has greatest common divisor $1$. Assume that $s,m\in\N$
are relatively prime. Then the algebra $R[\![\bu^{s/m}]\!]$ is flat if and only if $s\in\log_\bu R$.
\end{cor}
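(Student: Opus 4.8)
The plan is to reduce the statement to Proposition~\ref{flat} after passing to a common variable, and then finish with an elementary congruence computation. First I would introduce a variable $\bv$ with $\bu=\bv^m$, so that, writing $S=\log_\bu R$, we have $R=\kappa[\![\bv^{mS}]\!]$ and the algebra $R[\![\bu^{s/m}]\!]$ becomes $\kappa[\![\bv^{S'}]\!]$ with $S'=mS+s\N\subset\N$. Since $S$ has greatest common divisor $1$, the monoid $mS$ has greatest common divisor $m$; and because $\gcd(s,m)=1$, the monoid $S'$ has greatest common divisor $1$. So Proposition~\ref{flat} applies with $d=m$ and $d'=1$, and flatness of $R[\![\bu^{s/m}]\!]$ is equivalent to the condition $S'\cap m\Z=mS$.

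Next I would compute $S'\cap m\Z$ explicitly. A typical element of $S'$ is $m\sigma+ks$ with $\sigma\in S$ and $k\in\N$, and it lies in $m\Z$ exactly when $ks\equiv 0\pmod m$, which by $\gcd(s,m)=1$ forces $m\mid k$. Writing $k=mj$ gives $S'\cap m\Z=\{\,m(\sigma+js):\sigma\in S,\ j\in\N\,\}=m(S+s\N)$. Hence the criterion $S'\cap m\Z=mS$ becomes $S+s\N=S$, and this holds if and only if $s\in S=\log_\bu R$: if $s\in S$ then $S$ already absorbs $s\N$, while conversely $s=0+s\in S+s\N$.

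I do not expect a real obstacle, since the substance is packaged inside Proposition~\ref{flat}. The only point needing a little care is the bookkeeping of greatest common divisors after the substitution $\bu=\bv^m$ --- in particular arranging that $d'=1$ so that Proposition~\ref{flat} is applicable as stated --- together with the observation that the hypothesis $\gcd(s,m)=1$ is used twice: once to get $\gcd(S')=1$, and once in the divisibility step $ks\equiv 0\pmod m\Rightarrow m\mid k$.
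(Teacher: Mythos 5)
Your ``only if'' direction is fine: flatness does imply $S'\cap m\Z=mS$ by the last assertion of Proposition~\ref{flat}, and your computation $S'\cap m\Z=m(S+s\N)$ then forces $s\in S$. The gap is in the ``if'' direction. You assert that Proposition~\ref{flat} makes flatness \emph{equivalent} to $S'\cap m\Z=mS$, but that is not what the proposition says: its equivalent third condition is $\Delta_S(S')\cap (d/d')\Z\subset S$, a condition on \emph{differences of Ap\'ery numbers}, while $S=S'\cap (d/d')\Z$ is stated only as a necessary consequence of flatness. Indeed, the paper points out immediately after this corollary that the set-theoretic condition is not sufficient in general: for $\3uRing{5}{8}{9}/\3uRing{9}{15}{21}$ one has $\langle 5,8,9\rangle\cap 3\Z=\langle 9,15,21\rangle$, yet the algebra is not flat. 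So as written, your proof of ``$s\in\log_\bu R\Rightarrow$ flat'' cites a tool that does not deliver the implication.

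The direction can be repaired, and two routes are available. The paper's own route counts Ap\'ery monomials: every Ap\'ery monomial of $R[\![\bu^{s/m}]\!]$ is of the form $\bu^{ks/m}$ with $0\leq k<t$, where $t$ is minimal with $\bu^{ts/m}$ divisible by a nontrivial coefficient; if $s\in\log_\bu R$ then $(\bu^{s/m})^m=\bu^s$ gives $t\leq m$, so there are at most $m$ Ap\'ery monomials, and the lower bound $d/d'=m$ from Proposition~\ref{flat} forces exactly $m$, hence flatness. Alternatively, you can keep your reduction but verify the correct condition: in the variable $\bv$ all Ap\'ery numbers lie in $s\N$, so if two representations give $m\sigma_1+k_1s=m\sigma_2+k_2s$ with $k_2-k_1=mj>0$, then $mjs\in s\N\subset S'$, whence $mjs\in S'\cap m\Z=mS$ and $js\in S$, contradicting that $\bv^{k_2s}$ is Ap\'ery; this uses the special feature that every multiple of $s$ lies in $S'$, which is exactly what fails in the general counterexample. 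Either supplement closes the gap; without one of them the proof is incomplete.
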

\begin{proof}
The set of Ap\'{e}ry numbers is $\{0,s/m,2s/m,\ldots,(t-1)s/m\}$, where $t$ is the smallest positive
integer satisfying $ts/m\in\log_\bu R$. The algebra is flat if and only if there are $m$ Ap\'{e}ry numbers.
If the algebra is flat, then $t=m$ and hence $s\in\log_\bu R$. Conversely, if $s\in\log_\bu R$, then
$t\leq m$. There are at least $m$ Ap\'{e}ry monomials. Hence $t=m$ and consequently the algebra is
flat. \qed
\end{proof}
Let $S\subset\N$ be a numerical semigroup with the greatest common divisor $1$.
As an application of Corollary~\ref{cor:f1}, the algebra $\2uRing{S}{s}/\uRing{S}$
is not flat for any $s\in\N\setminus S$. See Example~\ref{ex:nonflat}. More generally, for any numerical 
semigroup $S'$ satisfying $S\subsetneq S'\subset\N$, Proposition~\ref{flat} implies that 
the algebra $\uAlg{S'}{S}$ is not flat. If we remove the condition on the
greatest common divisor, plenty of flat algebras come out by adding monomials.
\begin{example}
Let $R=\2uRing{2}{3}$. Since $3\in\log_\bu R$, the algebra $R[\![\bu^{3/2}]\!]$ is flat by 
Corollary \ref{cor:f1}. In terms of the new variable $\bv=\bu^{1/2}$, we may write 
$R[\![\bu^{3/2}]\!]=\threevRing{4}{6}{3}=\twovRing{4}{3}$.
\end{example}
\begin{example}
Joining the $4$-th root of $\bu^6$ and the square roots of $\bu^5$ and $\bu^7$ to 
$\3uRing{5}{6}{7}$, we obtain $\threevRing{3}{5}{7}$ in terms of the variable $\bv=\bu^{1/2}$.
The algebra $\threevRing{3}{5}{7}/\3uRing{5}{6}{7}$ has more than two Ap\'{e}ry monomials,
including three minimal monomials. By Proposition \ref{flat}, the algebra is not flat.
\end{example}

If $S'$ is generated by $S$ and one more element in Proposition~\ref{flat}, the condition 
$S'\cap d\Z=S$ is also sufficient for flatness as stated in Proposition~2.5 in \cite{HK}.
This is not true in general.
\begin{example}
	In the algebra $\3uRing{5}{8}{9}/\3uRing{9}{15}{21}$, 
	the monomial $\bu^{23}$ has different representations $\bu^{18}\bu^5$ and $\bu^{15}\bu^8$. 
	The algebra is not flat, although $\langle5,8,9\rangle\cap 3\Z=\langle 9,15,21\rangle$.
\end{example}

Now, we give a necessary condition for a numerical semigroup algebra
to be flat. For an element $s$ in a numerical semigroup $T$, a \em{divisor} of $s$ in $T$ is an
element $t\in T$ such that $s-t\in T$. The terminology is justified by its exponential counterpart,
where $\bu^t$ divides $\bu^s$ in $\uRing{T}$. We call $0$ the
\em{trivial divisor} of $s$ and any divisor not equal to $s$ a \em{proper divisor}.
\begin{prop}\label{prop:log}
	If a numerical semigroup algebra $R[\![\bu^{s_1},\ldots,\bu^{s_n}]\!]$  is flat, then any two minimal
	generators of $\log_\bu R$ belonging to $T:=\langle s_1,\ldots,s_n\rangle$ have only
	trivial common divisor  in $T$.	
\end{prop}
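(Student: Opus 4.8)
The plan is to argue by contradiction using the characterization of flatness by unique representations. Write $S=\log_\bu R$ and $T=\langle s_1,\ldots,s_n\rangle$, and set $R'=R[\![\bu^{s_1},\ldots,\bu^{s_n}]\!]$, so $\log_\bu R'=S+T$. Suppose that two distinct minimal generators $a,b$ of $S$ lie in $T$ and admit a common divisor $c\in T$ with $c\neq 0$, and write $a=c+a'$, $b=c+b'$ with $a',b'\in T$. By Lemma~\ref{lem:ur}, flatness means that each monomial $\bu^s$ of $R'$ has a \emph{unique} representation $\bu^s=\bu^{\lambda(s)}\bu^{\pi(s)}$ with $\lambda(s)\in S$ and $\bu^{\pi(s)}\in\Apr(R'/R)$; in particular $s\in S$ exactly when $\pi(s)=0$, in which case $\lambda(s)=s$. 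I would aim to force $c$, $a'$ and $b'$ to be Ap\'{e}ry numbers, and then to write a single monomial with two distinct representations.

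First I would show $c\notin S$. If $c\in S$, then $\bu^a=\bu^c\bu^{a'}=\bu^{c+\lambda(a')}\bu^{\pi(a')}$ is a representation of $\bu^a\in R$, so uniqueness gives $\pi(a')=0$, hence $a'\in S$; then $a=c+a'$ is a sum of two elements of $S$, and minimality of $a$ with $c\neq 0$ forces $a'=0$, i.e.\ $c=a$. Running the same argument on $b=a+b'$ gives $b'=0$ and $b=a$, contradicting $a\neq b$. So $c\notin S$, and therefore $\pi(c)\neq 0$.

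Next, from $\bu^a=\bu^c\bu^{a'}=\bu^{\lambda(c)+\lambda(a')}\bu^{\pi(c)+\pi(a')}$, reducing the monomial $\bu^{\pi(c)+\pi(a')}$ to its representation and comparing with the representation $\bu^a\bu^0$ of $\bu^a$, uniqueness forces $\pi(c)+\pi(a')$ to have trivial Ap\'{e}ry part, i.e.\ $\pi(c)+\pi(a')\in S$. Then $a=\bigl(\lambda(c)+\lambda(a')\bigr)+\bigl(\pi(c)+\pi(a')\bigr)$ exhibits $a$ as a sum of two elements of $S$, and since $\pi(c)>0$ the second summand is positive, so minimality of $a$ forces $\lambda(c)=\lambda(a')=0$. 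Hence $c=\pi(c)$ and $a'=\pi(a')$ are Ap\'{e}ry numbers, and the identical computation applied to $b=c+b'$ (again using $\pi(c)>0$) shows $b'$ is an Ap\'{e}ry number. I expect this to be the main obstacle: it couples the uniqueness of representations with the fact that a minimal generator of $S$ is not a sum of two positive elements of $S$, and one must carefully verify that every exponent produced along the way really lies in $S$, not merely in $S+T$, before minimality can be used.

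Finally, $a+b'=c+a'+b'=b+a'$, so $\bu^a\bu^{b'}$ and $\bu^b\bu^{a'}$ are two representations of the single monomial $\bu^{a+b'}$ of $R'$, because $\bu^a,\bu^b\in R$ while $\bu^{a'},\bu^{b'}\in\Apr(R'/R)$. Uniqueness gives $\bu^a=\bu^b$, that is $a=b$, contradicting $a\neq b$. Hence $0$ is the only common divisor of $a$ and $b$ in $T$.
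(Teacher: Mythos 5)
Your proof is correct. It runs on the same engine as the paper's --- Lemma~\ref{lem:ur} (flatness as uniqueness of representations) combined with the fact that a minimal generator of $\log_\bu R$ cannot be written as a sum of two nonzero elements of $\log_\bu R$ --- but it is organized along a genuinely different route. The paper first proves an auxiliary claim that \emph{every} proper divisor in $T$ of a minimal generator lying in $T$ is an Ap\'ery number, then reduces to the case where the common divisor is one of the $s_i$ and gets its contradiction via the positive-part identity $a_1+\sum(\alpha'_i-\alpha_i)^+s_i=a_2+\sum(\alpha_i-\alpha'_i)^+s_i$. You instead establish Ap\'ery-ness only for the three elements actually needed, namely $c$, $a-c$ and $b-c$ (after first ruling out $c\in S$ so that $\pi(c)>0$, which is what lets minimality bite), and then the symmetric identity $a+(b-c)=b+(a-c)$ immediately produces two distinct representations $\bu^a\bu^{b-c}=\bu^b\bu^{a-c}$ of a single monomial, contradicting flatness. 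Your endgame is cleaner and avoids the paper's reduction and bookkeeping, while the paper's intermediate claim is stronger (it covers all proper divisors) though it is not reused elsewhere. The only ingredient worth stating explicitly in your write-up is the atomicity fact you invoke several times --- that a minimal generator of $S$ is not a sum of two nonzero elements of $S$ --- which does hold for finitely generated submonoids of the positive rationals and is precisely the fact the paper also relies on.
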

\begin{proof}
	Let $a_1,\ldots,a_m$ be the minimal generators of $\log_\bu R$. Assume that
	$a_1\in T$. We claim that any proper divisor $a'_1$ of $a_1$ in $T$ is an Ap\'{e}ry number.
	Write
	\begin{equation}\label{eq:3144}
	a'_1=\sum\beta_ia_i+\sum \gamma_js_j
	\end{equation}
	and
	\[
	a_1-a'_1+\sum\gamma_js_j=\sum\beta'_ia_i+\sum\gamma'_js_j
	\]
	for certain Ap\'{e}ry numbers $\sum \gamma_js_j$ and $\sum\gamma'_js_j$. Then
	\[
	a_1=\sum(\beta_i+\beta'_i)a_i+\sum\gamma'_js_j.
	\]
	By the criterion of flatness in Lemma~\ref{lem:ur},
	\[
	\sum(\beta_i+\beta'_i)a_i=a_1
	\]
	Since $a_1,\ldots,a_m$ are minimal generators, we have $\beta_1+\beta'_1=1$ and
	$\beta_i=\beta'_i=0$ for $i>1$. Since $a_1>a'_1$, we have $\beta_1=0$ from (\ref{eq:3144}).
	Hence $a'_1$ is the Ap\'{e}ry number $\sum \gamma_js_j$.

	For $\alpha\in\Z$, we use the notation $\alpha^+:=\alpha$ if $\alpha>0$ and $\alpha^+:=0$
	otherwise. To show the proposition, we assume the contrary that $a_1,a_2\in T$ come with
	a common non-trivial divisor $t$. By joining $t$ to the set $\{s_i\}$, we may assume $t=s_1$.
	Then there are expressions
	$a_1=\sum\alpha_is_i$ and $a_2=\sum\alpha'_is_i$ for
	positive $\alpha_1$ and $\alpha'_1$. By flatness, $\sum(\alpha'_i-\alpha_i)^+s_i$ and
	$\sum(\alpha_i-\alpha'_i)^+s_i$ in the expressions
	\[
	a_1+\sum(\alpha'_i-\alpha_i)^+s_i=a_2+\sum(\alpha_i-\alpha'_i)^+s_i
	\]
	cannot be both Ap\'{e}ry. Assume that the divisor $\sum(\alpha'_i-\alpha_i)^+s_i$ of $a_2$
	is not Ap\'{e}ry. By the claim in the previous paragraph, the number $(\alpha'_1-\alpha_1)^+$
	can not be less than $\alpha'_1$. This is impossible since $\alpha_1>0$. \qed
\end{proof}
For $n=2$ in Proposition~\ref{prop:log}, we have more precise information.
\begin{cor}\label{2.5}
	Let $s_1$ and $s_2$ be positive integers satisfying $\gcd(s_1,s_2)=1$.
The following statements are equivalent for a numerical semigroup algebra
$R[\![\bu^{s_1},\bu^{s_2}]\!]$ satisfying the condition $\log_{\bu}R\subseteq\langle s_1,s_2\rangle$.
\begin{itemize}
	\item The algebra $R[\![\bu^{s_1},\bu^{s_2}]\!]$ 	is flat.
	\item $\log_{\bu}R$ is either principal or is generated by $a_1s_1$ and $a_2s_2$ for some 	
	positive integers $a_1$ and $a_2$ such that $a_1$ divides $s_2$ and $a_2$ divides $s_1$.
\end{itemize}
\end{cor}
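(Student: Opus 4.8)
The plan is to set $T:=\langle s_1,s_2\rangle$ and $S:=\log_{\bu}R$, so that (using $S\subseteq T$) the algebra under study is $\uAlg{T}{S}$. If $s_1=1$ or $s_2=1$ then $T=\N$, and Proposition~\ref{flat} gives flatness $\iff S=\N\cap d\Z=d\N$ is principal (where $d:=\gcd S$), which matches the claim; so one may assume $s_1,s_2\ge 2$. The whole argument rests on two facts: (i) by Proposition~\ref{flat}, flatness is equivalent to having exactly $d$ Apéry monomials, and then $S=T\cap d\Z$; (ii) by Proposition~\ref{prop:log} and the auxiliary claim proved inside its proof, if the algebra is flat then any two minimal generators of $S$ have only the trivial common divisor in $T$, and every proper divisor in $T$ of a minimal generator of $S$ is an Apéry number.

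For the direction ``$\Leftarrow$'': if $S=\langle c\rangle$ is principal then $c\in T$ and $\kappa[\![\bu^{c}]\!]$ is a Noether normalization of $\uRing{T}$ (module finite since $\N\setminus T$ is finite), so the algebra is flat by \cite[Corollary~2.2]{HK}. If $S=\langle a_1s_1,a_2s_2\rangle$ with $a_1\mid s_2$ and $a_2\mid s_1$, then $S=a_1a_2\langle s_1/a_2,\,s_2/a_1\rangle$ with $\gcd(s_1/a_2,s_2/a_1)=1$, so $d=a_1a_2$; writing any $n\in T$ as $x s_1+y s_2$ with $x,y\ge 0$ and reducing $x$ modulo $a_1$, $y$ modulo $a_2$, one gets $n=\sigma+w$ with $\sigma\in S$ and $w\in W:=\{\alpha s_1+\beta s_2:0\le\alpha<a_1,\ 0\le\beta<a_2\}$. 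Hence every Apéry monomial is $\bu^{w}$ with $w\in W$; since $a_1\le s_2$ and $a_2\le s_1$ the elements of $W$ are pairwise distinct, so there are at most $a_1a_2=d$ Apéry monomials, hence exactly $d$, and the algebra is flat by Proposition~\ref{flat}.

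For ``$\Rightarrow$'', assume flatness, so $S=T\cap d\Z$; if $S$ is principal we are done, so assume $S$ has at least two minimal generators. First I would show that each minimal generator lies in $s_1\N\cup s_2\N$: a generator $g=\alpha_0 s_1+\beta_0 s_2$ with $\alpha_0,\beta_0\ge 1$ has both $s_1$ and $s_2$ as proper divisors in $T$, so by (ii) neither divides another minimal generator $g'$ in $T$; but ``$g'-s_i\notin T$'' says exactly that $g'$ belongs to the Apéry set of $T$ with respect to $\langle s_i\rangle$, and the two such sets $\{0,s_2,\dots,(s_1-1)s_2\}$ and $\{0,s_1,\dots,(s_2-1)s_1\}$ meet only in $0$ (a common nonzero element would be a common multiple of $s_1,s_2$ less than $s_1s_2$), forcing $g'=0$, a contradiction. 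Next, $s_1\N$ contains at most one minimal generator (from $as_1<bs_1$ one gets $bs_1=as_1+(b-a)s_1$ with both summands in $S\setminus\{0\}$, using $S=T\cap d\Z$), likewise $s_2\N$; so $S=\langle a_1s_1,a_2s_2\rangle$ with $a_1,a_2\ge 1$, and since $s_1\N$ (resp.\ $s_2\N$) cannot contain both generators, $s_2\nmid a_1$ and $s_1\nmid a_2$.

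The remaining point --- and the one I expect to carry the real content --- is the divisibility $a_1\mid s_2$, $a_2\mid s_1$. I would first rule out $a_1>s_2$: then $s_1s_2$ is a proper divisor of $g_1=a_1s_1$ in $T$ (as $g_1-s_1s_2=(a_1-s_2)s_1\in T$), hence an Apéry number by (ii); but if also $a_2>s_1$ then $s_1s_2$ is a common divisor of $g_1$ and $g_2=a_2s_2$, contradicting (ii), while if $a_2<s_1$ then $s_1s_2-g_2=(s_1-a_2)s_2\in T$ makes $\bu^{s_1s_2}$ divisible by the coefficient $\bu^{g_2}$, contradicting that it is Apéry. Hence $a_1<s_2$, and symmetrically $a_2<s_1$. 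Now the estimate from the ``$\Leftarrow$'' direction sharpens to an equality $\Apr=\{\bu^{w}:w\in W\}$: for $w=\alpha s_1+\beta s_2\in W$, each of $w-a_1s_1$ and $w-a_2s_2$ is negative or lies strictly below the least element of $T$ in its residue class modulo $s_1$ (resp.\ $s_2$), hence is not in $T$, so $w$ is Apéry. Thus $\Apr$ has exactly $a_1a_2$ elements, and flatness forces $a_1a_2=d=\gcd(a_1s_1,a_2s_2)$; since this gcd divides $a_1s_1$ and $a_2s_2$, we conclude $a_2\mid s_1$ and $a_1\mid s_2$. The delicate parts are this exact determination of the Apéry set and the elimination of the ``too large'' cases for $a_1,a_2$; everything else is formal from Propositions~\ref{flat} and~\ref{prop:log}.
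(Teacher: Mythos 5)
Your proof is correct, and it shares the paper's overall skeleton --- Proposition~\ref{prop:log} forces $\log_\bu R=\langle a_1s_1,a_2s_2\rangle$ in the non-principal case, the Ap\'ery monomials are counted to be $a_1a_2$ in number, and Proposition~\ref{flat} then converts flatness into $\gcd(a_1s_1,a_2s_2)=a_1a_2$, i.e.\ the divisibility conditions --- but the middle step is carried out by a genuinely different argument. The paper shows that the $a_1a_2$ elements $r_1s_1+r_2s_2$ with $0\le r_i<a_i$ are Ap\'ery and pairwise distinct directly from the ``only trivial common divisor'' conclusion of Proposition~\ref{prop:log}, with no need for any bound on $a_1,a_2$; you instead first extract the strict bounds $a_1<s_2$ and $a_2<s_1$ (via the auxiliary claim inside the proof of Proposition~\ref{prop:log}, applied to $s_1s_2$ as a divisor of $a_1s_1$ when $a_1>s_2$) and then pin down the Ap\'ery set by residue-class arguments modulo $s_1$ and $s_2$. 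Your route costs an extra case analysis --- note that the case $a_2=s_1$ in your trichotomy is excluded only by your earlier observation $s_1\nmid a_2$, which you should invoke explicitly at that point --- but it buys the explicit description $\Apr=\{\bu^{\alpha s_1+\beta s_2}\colon 0\le\alpha<a_1,\ 0\le\beta<a_2\}$ and, more importantly, a self-contained treatment of the sufficiency direction: you verify flatness from the divisibility hypotheses by a direct count of Ap\'ery monomials (and by Noether normalization in the principal case), whereas the paper's concluding ``if and only if'' implicitly reuses the trivial-common-divisor property, which Proposition~\ref{prop:log} guarantees only under flatness and which therefore needs the kind of separate check you supply. You also spell out the reduction from Proposition~\ref{prop:log} to the two-generator form, which the paper asserts in one line; your argument there (a mixed generator would force every other minimal generator into both Ap\'ery sets of $\langle s_1,s_2\rangle$, hence to be $0$) and your use of $S=T\cap d\Z$ to see that each of $s_1\N$ and $s_2\N$ contains at most one minimal generator are both correct.
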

\begin{proof}
	We work on the case that $\log_{\bu}R$ is not principal. By Proposition~\ref{prop:log}, it is generated
	by two elements $a_1s_1$ and $a_2s_2$ with only trivial divisor in common.
	For $0\leq r_1<a_1$ and $0\leq r_2<a_2$, we claim that $r_1s_1+r_2s_2$ is Ap\'{e}ry.
	Assume the contrary, by changing indices, we may write $r_1s_1+r_2s_2=a_1s_1+r'_1s_1+r'_2s_2$.
	Then $a_2s_2=(a_1-r_1+r'_1)s_1+(a_2-r_2+r'_2)s_2$ implies that $s_1$ is a divisor of $a_2s_2$.
	This is impossible. For distinct pairs $(r_1,r_2)$ and $(r'_1,r'_2)$ satisfying $0\leq r_i<a_i$ and
	$0\leq r'_i<a_i$, we claim that $r_1s_1+r_2s_2\neq r'_1s_1+r'_2s_2$.
	We may assume $r_1<r'_1$. If $r_1s_1+r_2s_2=r'_1s_1+r'_2s_2$, then
	$a_2s_2=(r_1'-r_1)s_1+(a_2-r_2+r'_2)s_2$ implies that $s_1$ is a divisor of $a_2s_2$.
	This is impossible. Therefore there are $a_1a_2$ Ap\'{e}ry numbers.  By Proposition~\ref{flat},
	the $R$-algebra $R[\![\bu^{s_1},\bu^{s_2}]\!]$ is flat if and only if
	$\gcd(a_1s_1,a_2s_2)=a_1a_2$, equivalently $a_1$ divides $s_2$ and $a_2$ divides $s_1$. \qed
\end{proof}

\begin{example}
The algebra $\2uRing{3}{4}/\2uRing{9}{12}$ is not flat by Corollary~\ref{2.5}. The algebra has
Ap\'{e}ry monomials $1, \bu^3, \bu^4, \bu^6,\bu^7,\bu^8,\bu^{10},\bu^{11},\bu^{14}$. While $\gcd(9,12)=3$,
one sees that the algebra is not flat also by counting Ap\'{e}ry monomials.
\end{example}


\section{Rectangles}\label{sec:rectangle}


In this section, we denote by $\bu^{s_1},\ldots,\bu^{s_n}$ the
minimal monomials of a numerical semigroup algebra $R'/R$ in the variable $\bu$.
\begin{defn}[rectangle]
The set of the Ap\'{e}ry monomials of the algebra $R'/R$ is called a rectangle of size
$\beta_1\times\cdots\times\beta_n$ if the following conditions hold.
\begin{itemize}
\item
All Ap\'{e}ry monomials can be factored uniquely as
$(\bu^{s_1})^{\ell_1}\cdots (\bu^{s_n})^{\ell_n}$, where $0\leq \ell_i<\beta_i$.
\item
All monomials $(\bu^{s_1})^{\ell_1}\cdots (\bu^{s_n})^{\ell_n}$ with
$0\leq \ell_i<\beta_i$ are Ap\'{e}ry.	
\end{itemize}
A numerical semigroup algebra is \em{rectangular} if the set of its Ap\'{e}ry monomials forms a rectangle.
\end{defn}
As seen in the next example, the condition of uniqueness is essential in the definition of rectangles.
\begin{example}
	The algebra $\4uRing{14}{21}{22}{33}/\uRing{22}$ is flat with $22$ Ap\'{e}ry monomials. 
	Since $22\neq\beta_1\times\beta_2\times\beta_3$ for integers $\beta_i>1$, the algebra is not 
	rectangular. Note that the set of Ap\'{e}ry monomials can be described as
	$\{1,\bu^{14},\bu^{28},\bu^{42}\}\times\{1,\bu^{21},\bu^{42}\}\times\{1,\bu^{33}\}$. However this set is not 
	a rectangle, because $\bu^{42}$ and $\bu^{75}$ have different expressions.
\end{example}
A rectangle of Ap\'{e}ry monomials may have different sizes.
\begin{example}
The algebra $\2uRing{2}{3}/\uRing{12}$ is rectangular. Its set of Ap\'{e}ry monomials can be described as
a $4\times 3$ rectangle $\{1,\bu^3,\bu^6,\bu^9\}\times\{1,\bu^2,\bu^4\}$ or a $2\times 6$ rectangle
$\{1,\bu^3\}\times\{1,\bu^2,\bu^4,\bu^6,\bu^8,\bu^{10}\}$.
\end{example}
A numerical semigroup algebra obtained by joining one root of a monomial is always rectangular,
but it may not  be flat, see Example~\ref{ex:nonflat}. 
We are mainly interested in flat rectangular algebras.
Since a rectangular algebra has a unique maximal Ap\'{e}ry monomial, flat
rectangular algebras are Gorenstein  by \cite[Proposition 3.1]{HK}.
Here is another non-flat rectangular algebra:
\begin{example}\label{ex:35}
The algebra $\4uRing{14}{21}{22}{33}/\2uRing{14}{22}$ is not flat, since
$\bu^{231}$ has two representations $(\bu^{14})^{15}\bu^{21}=(\bu^{22})^9\bu^{33}$.
The algebra is rectangular and the set of its Ap\'ery monomials is $\{1,\bu^{21}\}\times\{1,\bu^{33}\}$.
\end{example}
Certain flat numerical semigroup algebras are always rectangular. See the proof of Corollary~\ref{2.5}.
\begin{prop}
Let $R$ be a numerical semigroup ring which is not a power series ring.
	A flat algebra $R[\![\bu^{s_1},\bu^{s_2}]\!]$ satisfying
	$\log_{\bu}R\subseteq\langle s_1,s_2\rangle\subseteq\N$ and $\gcd(s_1,s_2)=1$ is rectangular.
\end{prop}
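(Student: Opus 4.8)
The plan is to reduce to the structure already extracted in the proof of Corollary~\ref{2.5}. By Proposition~\ref{prop:log}, since $R$ is not a power series ring, $\log_\bu R$ is either principal or generated by two elements $a_1 s_1$ and $a_2 s_2$ with only the trivial divisor in common in $T := \langle s_1, s_2\rangle$. I would handle the principal case first: if $\log_\bu R = \langle a\rangle$ for a single generator $a$, then $\bu^{s_1},\bu^{s_2}$ are obtained by joining roots, and a quick divisor-counting argument (or a direct appeal to Corollary~\ref{2.5}, whose proof shows $a \in \langle a_1 s_1, a_2 s_2\rangle$ degenerates to one generator) shows the Ap\'ery monomials form a one-dimensional rectangle $\{1,\bu^{s_1},\dots\}$ of size $\beta_1 \times 1$ (where $\beta_1 s_1 = a$), after possibly relabeling. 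Actually the cleanest route is: in the principal case $a = a_1 s_1$ say, so $s_2 \notin \langle s_1\rangle$ forces... — I would just note $\Apr$ is $\{(\bu^{s_1})^{\ell_1}(\bu^{s_2})^{\ell_2}\}$ and re-run the two uniqueness claims below with $a_2 = 1$.

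Second, in the main (non-principal) case, I would simply invoke the computation inside the proof of Corollary~\ref{2.5} verbatim: there it is shown that under flatness (equivalently $a_1 \mid s_2$ and $a_2 \mid s_1$), every monomial $r_1 s_1 + r_2 s_2$ with $0 \le r_i < a_i$ is an Ap\'ery number, that these $a_1 a_2$ numbers are pairwise distinct, and that (by Proposition~\ref{flat}, since the algebra is flat) there are exactly $a_1 a_2$ Ap\'ery monomials. These three facts are precisely the two bullet points in the definition of rectangle with $\beta_1 = a_1$ and $\beta_2 = a_2$: the distinctness and count give that each Ap\'ery monomial is $(\bu^{s_1})^{\ell_1}(\bu^{s_2})^{\ell_2}$ with $0 \le \ell_i < a_i$ uniquely, and the "every such product is Ap\'ery" statement is the first fact. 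So the proof is essentially a one-line citation of the internal mechanics of the earlier proof, reorganized to read off the rectangle structure.

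The only genuine point requiring care — and the step I expect to be the main obstacle — is making the principal case airtight, since Corollary~\ref{2.5} bundles "principal" into a single bullet without exhibiting the Ap\'ery structure. I would argue directly: if $\log_\bu R = \langle a\rangle$, write $a = \alpha_1 s_1 + \alpha_2 s_2$ in $T$; flatness (Lemma~\ref{lem:ur}) forces this to be the unique expression of $a$ as an $\N$-combination of $s_1, s_2$, so by the standard numerical-semigroup uniqueness fact for two coprime generators, one of $\alpha_1,\alpha_2$ is $0$, say $a = \beta_1 s_1$ with $\beta_1 < s_2$ (minimality). Then for $0 \le \ell < \beta_1$ the monomial $(\bu^{s_1})^\ell$ is Ap\'ery (a proper power $\bu^{\ell s_1}$ with $\ell s_1 < \beta_1 s_1 = a$ cannot be divisible by $\bu^a$, and $a$ is the only coefficient monomial to worry about up to that degree), and conversely any Ap\'ery monomial $(\bu^{s_1})^{\ell_1}(\bu^{s_2})^{\ell_2}$ must have $\ell_2 = 0$ whenever... — here I would either push through the divisor bookkeeping or, more economically, observe that the count of Ap\'ery monomials is $a/\gcd(a, \text{stuff})$ via Corollary~\ref{cor:f1}-type reasoning and match it against $\beta_1$. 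Either way this is routine; there is no deep content beyond Propositions~\ref{flat} and~\ref{prop:log} and the internal argument of Corollary~\ref{2.5}.
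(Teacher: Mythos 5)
Your main (non-principal) case is exactly the paper's argument: the paper gives no separate proof of this proposition beyond the pointer to the proof of Corollary~\ref{2.5}, and your reduction --- Proposition~\ref{prop:log} gives $\log_\bu R=\langle a_1s_1,a_2s_2\rangle$, the monomials $(\bu^{s_1})^{r_1}(\bu^{s_2})^{r_2}$ with $0\le r_i<a_i$ are Ap\'ery and pairwise distinct, and flatness via Proposition~\ref{flat} pins the number of Ap\'ery monomials at $a_1a_2$, so the Ap\'ery set is precisely this $a_1\times a_2$ rectangle --- is the intended proof.

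The problem is the case you single out as ``the main obstacle.'' You write ``since $R$ is not a power series ring, $\log_\bu R$ is either principal or \dots,'' but the hypothesis that $R$ is not a power series ring says \emph{exactly} that $\log_\bu R$ is not principal; the principal case is excluded by assumption, and deliberately so, because the conclusion is false there: the paper's own example notes that $\2uRing{2}{3}/\uRing{5}$ is flat (Ap\'ery monomials $1,\bu^2,\bu^3,\bu^4,\bu^6$) yet not rectangular. Correspondingly, your attempted argument for that case contains a false step: flatness does not force the unique $\N$-expression of the generator $a$ in $\langle s_1,s_2\rangle$ to have a vanishing coefficient, and there is no ``standard uniqueness fact'' to that effect for two coprime generators --- $5=2+3$ is the unique expression of $5$ in $\langle 2,3\rangle$ with both coefficients nonzero, which is precisely the counterexample above. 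So strike the principal case entirely, citing the hypothesis; what remains of your proposal is correct and coincides with the paper's proof. As written, the flaw is one of reading the hypothesis, not of the surviving argument, but had the case been genuine your treatment of it would not have gone through.
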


Free numerical semigroups~\cite{BC} (not to be confused with the notion of free algebras 
given by numerical semigroups) provide examples of rectangular algebras in the classical 
case: Assume that $R=\kappa[\![\bu^{s_0}]\!]$. Let $S$ be the semigroup minimally 
generated by $s_0,s_1,\ldots,s_n$. Assume that $S$ is free in the sense that its Ap\'{e}ry 
numbers with respect to $s_0\N$ can be listed as $\sum_{i=1}^n\lambda_is_i$ for 
$0\leq\lambda_i<\phi_i$ after rearranging the indices, where 
$\phi_i=\min\{h\in\N\,|\, hs_i\in\langle s_0,\ldots,s_{i-1}\rangle\}$. By 
\cite[Proposition 9.15]{GR}, there are $\phi_1\phi_2\cdots\phi_n$ Ap\'{e}ry monomials.
So the algebra $\kappa[\![\bu^{s_0},\ldots,\bu^{s_n}]\!]/\kappa[\![\bu^{s_0}]\!]$ is rectangular.

 The notions of $\alpha$-rectangular, $\beta$-rectangular and 
$\gamma$-rectangular Ap\'ery sets for numerical semigroups also provide rectangular algebras.
Indeed, there are strict implications
\[
\text{$\alpha$-rectangular $\implies$ $\beta$-rectangular $\implies$ $\gamma$-rectangular
$\implies$ free}
\]
for numerical semigroups \cite[Theorem~2.13]{DMS}. In particular, if $S$ has an 
$\alpha$-rectangular Ap\'ery set, then the algebra $\uAlg{S}{s}$ is rectangular for some $s\in S$.
However, the converse is not true. For instance, the semigroup $\langle 5,6,9\rangle$ does not have 
$\gamma$-rectangular Ap\'ery set (hence not $\beta$-rectangular nor $\alpha$-rectangular), but 
$\kappa[\![\bu^5,\bu^6,\bu^9]\!]/\kappa[\![\bu^6]\!]$ is a rectangular algebra with the set  
$\{1,\bu^9\}\times\{1,\bu^5,\bu^{10}\}$ of Ap\'ery monomials.

Given a numerical semigroup $S$, the properties of Cohen-Macaulay, Gorenstein and
complete intersection of the algebra $\uAlg{S}{s}$ are independent of the choice of an
element $s\in S$. This is not the case for rectangular algebras.
\begin{example}
The Ap\'{e}ry monomials $1,\bu^2,\bu^3,\bu^4,\bu^6$ of
	$\2uRing{2}{3}/\uRing{5}$ do not form a rectangle. An algebra with only one
	minimal monomial, for instance $\2uRing{2}{3}/\uRing{3}$, is always rectangular.
\end{example}

If the set of Ap\'{e}ry monomials of $R'/R$ form a rectangle of size 
$\beta_1\times\cdots\times\beta_n$, every monomial can be
factored as $\bu^t(\bu^{s_1})^{\ell_1}\cdots (\bu^{s_n})^{\ell_n}$, where $\bu^t\in R$ and
$0\leq\ell_i<\beta_i$ for all $i$. Such a factorization is unique if and only if the algebra is flat.
Under the flatness assumption, there is a unique factorization 
\[
\bu^{s_i\beta_i}=\bu^{t_i}(\bu^{s_1})^{\beta_{i 1}}\cdots (\bu^{s_n})^{\beta_{i n}}
\]
such that $0\leq\beta_{ij}<\beta_i$ for all $j$. Note that $\beta_{ii}=0$ 
in the above factorization. Let $\bY$ be the variables 
$Y_1,\ldots,Y_n$ and $\bZ$ be the shorthand of $Z_1,\ldots,Z_n$, where 
$Z_\ell:=Y_\ell^{\beta_\ell}Y_1^{-\beta_{\ell 1}}\cdots Y_n^{-\beta_{\ell n}}$. In terms of the matrix
\begin{equation}\label{def:log}
\log_\bY\bZ:=
\begin{pmatrix}
\beta_1 & -\beta_{12} & \cdots & -\beta_{1n} \\
-\beta_{21}& \beta_2 & \cdots & -\beta_{2n} \\
\vdots & \vdots & \ddots & \vdots \\
-\beta_{n1} & -\beta_{n2} & \cdots & \beta_n
\end{pmatrix},
\end{equation}
we have a relation
\begin{equation}\label{eq:log}
\log_\bY\bZ
\begin{pmatrix} s_1 \\ s_2 \\ \vdots \\s_n \end{pmatrix}=
\begin{pmatrix} t_1 \\ t_2 \\ \vdots \\t_n \end{pmatrix}.
\end{equation}

\begin{example}
	In the algebra $\6uRing{32}{35}{38}{44}{48}{56}/\2uRing{32}{48}$, the squares of minimal
	monomials are not Ap\'{e}ry. So all Ap\'{e}ry monomials are in the set
	$\{0,\bu^{35}\}\times\{0,\bu^{38}\}\times\{0,\bu^{44}\}\times\{0,\bu^{56}\}$. Since $\gcd(32,48)=16$,
	this set consists of all the 16 Ap\'{e}ry monomials. Therefore the algebra is flat with a rectangle
	of size $2\times 2\times 2\times 2$. Relation (\ref{eq:log}) becomes
	\[
	\begin{pmatrix}
	2 & -1 & 0 & 0 \\
	0& 2 & -1 & 0 \\
	0 & 0 & 2 & -1 \\
	0 & 0 & 0 & 2
	\end{pmatrix}
	\begin{pmatrix}35\\38\\44\\56\end{pmatrix}=
	\begin{pmatrix}32\\32\\32\\2\times 32+48\end{pmatrix}.
	\]
\end{example}

If $\log_\bY\bZ$ is invertible, we call the rectangle \em{non-singular}. In such a case,
every monomial in $\bY$ can be written uniquely as $Z_1^{i_1}\cdots Z_n^{i_n}$ with
rational exponents $i_1,\ldots,i_n$. In other words, row vectors of $\log_\bY\bZ$ form
a basis for the vector space $\Q^n$. If furthermore all entries of the inverse of $\log_\bY\bZ$
are non-negative, every monomial in $\bY$ can be written as $Z_1^{i_1}\cdots Z_n^{i_n}$ with 
non-negative rational exponents. This statement can be also expressed in terms of 
vectors of exponents: If a vector in $\Q^n$ sits in the first orthant with respect to the 
standard basis, then the vector also sits in the first orthant with respect to the basis 
given by the row vectors of $\log_\bY\bZ$.

\begin{lem}\label{lem:matrix}
	Let
	\[
	B=\begin{pmatrix}
	\beta_1 & -\beta_{12} & \cdots & -\beta_{1n} \\
	-\beta_{21}& \beta_2 & \cdots & -\beta_{2n} \\
	\vdots & \vdots & \ddots & \vdots \\
	-\beta_{n1} & -\beta_{n2} & \cdots & \beta_n
	\end{pmatrix}
	\]
	be a matrix of real numbers satisfying the property that $\beta_j>\beta_{ij}\geq 0$ 
	for all $i$ and $j$. If $\beta_is_i\geq\sum_{j\neq i}\beta_{ij}s_j$ for certain positive 
	numbers $s_1,\ldots,s_n$ and for all $i$, then $\det B$ and all the entries of the adjoint of 
	$B$ are non-negative.
\end{lem}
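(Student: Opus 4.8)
The plan is to show that $B$ is a (possibly singular) $M$-matrix under the stated hypotheses, and then extract the sign information on $\det B$ and on $\adj B$ by a direct argument rather than invoking the general theory. Write $B = DI - C$ where $D = \mathrm{diag}(\beta_1,\ldots,\beta_n)$ and $C$ is the nonnegative matrix with $C_{ij} = \beta_{ij}$ for $i \neq j$ and $C_{ii} = 0$. The crucial observation is that the hypothesis $\beta_i s_i \geq \sum_{j \neq i} \beta_{ij} s_j$ says precisely that the positive vector $\mathbf{s} = (s_1,\ldots,s_n)^{\mathsf T}$ satisfies $B\mathbf{s} \geq 0$ entrywise. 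Together with the fact that $B$ has nonnegative diagonal and nonpositive off-diagonal entries, this is the standard sufficient condition for $B$ to be a singular or nonsingular $M$-matrix; in particular $\det B \geq 0$ and, more is true, every principal minor of $B$ is nonnegative.

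First I would record the off-diagonal sign pattern: $B$ has $B_{ii} = \beta_i > 0$ and $B_{ij} = -\beta_{ij} \leq 0$ for $i \neq j$, so $B$ is a $Z$-matrix. Next I would prove by induction on $n$ that a $Z$-matrix $B$ admitting a positive vector $\mathbf{s}$ with $B\mathbf{s} \geq 0$ has $\det B \geq 0$. For the inductive step, note that any principal submatrix of $B$ is again a $Z$-matrix, and one can check it inherits a positive test vector with nonnegative image after a suitable Schur-complement / elimination step (using that the pivot $\beta_1$ is strictly positive, so $\beta_1 > 0$ allows the elimination, and the Schur complement $B' = B_{22} - \beta_1^{-1} B_{21} B_{12}$ remains a $Z$-matrix because subtracting a nonnegative rank-one matrix from the off-diagonal keeps it nonpositive, while $B'$ applied to the truncated $\mathbf{s}$ stays $\geq 0$ by a short computation). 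Then $\det B = \beta_1 \det B' \geq 0$.

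For the adjoint, I would argue that the $(j,i)$ cofactor of $B$, up to sign, is $\pm \det B^{(i,j)}$ where $B^{(i,j)}$ is $B$ with row $i$ and column $j$ deleted, and reduce it to the case of a principal minor: when $i = j$ this is immediate since $B^{(i,i)}$ is a principal submatrix, hence a $Z$-matrix with positive test vector (delete the $i$-th coordinate of $\mathbf{s}$ and check $B^{(i,i)}\mathbf{s}^{(i)} \geq 0$, using $\beta_{ki} s_i \geq 0$), so $(\adj B)_{ii} = \det B^{(i,i)} \geq 0$. For $i \neq j$ the off-diagonal entries of $\adj B$ require a separate argument: I would use the identity $(\adj B)\, B = (\det B)\, I$ restricted to columns, or more efficiently observe that for a $Z$-matrix the adjoint entries satisfy a nonnegativity that propagates from the diagonal case via a path/combinatorial expansion of the determinant $\det B^{(i,j)}$ (each monomial in the Leibniz expansion, after accounting for the transposition sign, contributes with the correct sign because every off-diagonal factor $-\beta_{k\ell}$ carries a minus that cancels against the parity of the permutation cycle it lies on). Alternatively, and more cleanly, perturb: replace $B$ by $B + \varepsilon I$ for small $\varepsilon > 0$, which is then a nonsingular $M$-matrix (still a $Z$-matrix, and $(B+\varepsilon I)\mathbf{s} \geq \varepsilon \mathbf{s} > 0$), so $(B+\varepsilon I)^{-1} \geq 0$ by the classical characterization of nonsingular $M$-matrices; hence $\adj(B+\varepsilon I) = \det(B+\varepsilon I)\cdot (B+\varepsilon I)^{-1} \geq 0$, and letting $\varepsilon \to 0$ gives $\adj B \geq 0$ by continuity.

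The main obstacle I anticipate is the off-diagonal entries of $\adj B$: the diagonal ones and $\det B$ itself follow from a clean induction, but controlling the signs of the minors $\det B^{(i,j)}$ with $i \neq j$ directly is fiddly because $B^{(i,j)}$ is no longer a $Z$-matrix in an obvious way (deleting a non-matching row and column can leave a positive off-diagonal entry). For this reason the $\varepsilon$-perturbation route, leaning on the classical equivalence ``$Z$-matrix with a positive super-solution $\iff$ nonsingular $M$-matrix $\iff$ inverse-nonnegative,'' is the cleanest path, and I would write the proof that way, proving the inverse-nonnegativity of $B + \varepsilon I$ from scratch via the Neumann series $(\varepsilon I + D - C)^{-1} = (\varepsilon I + D)^{-1}\sum_{k\geq 0}\big((\varepsilon I + D)^{-1}C\big)^k$, whose convergence follows because the existence of $\mathbf{s}>0$ with $(D-C)\mathbf{s}\geq 0$ forces the spectral radius of $(\varepsilon I + D)^{-1}C$ to be $< 1$.
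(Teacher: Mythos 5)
Your proposal is correct, but it takes a genuinely different route from the paper. You recast the hypothesis as saying that $B$ is a $Z$-matrix (positive diagonal, nonpositive off-diagonal) admitting a positive vector $\mathbf{s}$ with $B\mathbf{s}\geq 0$, i.e.\ a possibly singular $M$-matrix, and then obtain the sign conclusions by perturbing: $(B+\varepsilon I)\mathbf{s}\geq\varepsilon\mathbf{s}>0$, the Neumann series gives $(B+\varepsilon I)^{-1}\geq 0$ and $\det(B+\varepsilon I)>0$ (your spectral-radius claim checks out, since $M\mathbf{s}\leq(\varepsilon I+D)^{-1}D\mathbf{s}<\mathbf{s}$ entrywise gives $\rho(M)<1$ in the weighted sup-norm; note only the harmless slip that the factorization should read $(\varepsilon I+D-C)^{-1}=\bigl(I-(\varepsilon I+D)^{-1}C\bigr)^{-1}(\varepsilon I+D)^{-1}$), hence $\adj(B+\varepsilon I)\geq 0$, and letting $\varepsilon\to 0$ yields $\adj B\geq 0$ and $\det B\geq 0$ by continuity. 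The paper instead argues by induction on $n$ entirely within determinant manipulations: principal minors inherit the hypotheses directly; for an off-diagonal cofactor it permutes rows and columns of the deleted minor into a normal form whose first row is $\{-\beta_{j\ell}\}$, applies the induction hypothesis to an auxiliary matrix $B''_{ij}$ obtained by restoring a positive corner entry, and reads off the sign from a Laplace expansion along that row; finally $\det B\geq 0$ follows by rescaling columns by the $s_i$, adding all columns to the first, and expanding along the now nonnegative first column. Your route is shorter conceptually and in fact proves a slightly stronger statement, since you never use the entrywise bound $\beta_j>\beta_{ij}$ (only $\beta_j>0$, $\beta_{ij}\geq 0$ and the weighted row condition), and it correctly anticipates the real obstacle, namely that a non-principal minor of a $Z$-matrix need not be a $Z$-matrix; the price is importing spectral-radius/Neumann-series machinery and a limiting argument, whereas the paper's proof is elementary, self-contained, and formulated so that its hypotheses are exactly preserved under the inductive reductions.
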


\begin{proof}
We use induction on the size $n$ of the matrix to prove the lemma. For $n=1,2$, the 
	lemma is clearly true. To work on the case $n>2$,
	we assume that the lemma holds for matrices of size less than $n$.
	
	Deleting the $i$-th row and the $j$-th column from $B$, we obtain a matrix $B_{ij}$
	whose determinant multiplied by $(-1)^{i+j}$ is an entry of the adjoint of $B$. Note that $B_{ii}$
	still satisfies the conditions of the lemma. By the induction hypothesis, 
	$\det B_{ii}\geq 0$. If $i<j$, we first perform $j-2$ operations of switching
	rows so that the $(j-1)$-th row of $B_{ij}$ becomes the first row and other rows of $B_{ij}$ 
	keep the order; then we perform $i-1$ operations of switching columns on the new matrix 
	obtained so that $i$-th column becomes the first column and other columns keep the order. 
	After these $i+j-3$ operations, $B_{ij}$ becomes a matrix $B'_{ij}$ satisfying the following 
	conditions.
	\begin{itemize}
		\item
		The first row consists of $\{-\beta_{j\ell}\}_{\ell\neq j}$.
		\item
		The first column consists of $\{-\beta_{\ell i}\}_{\ell\neq i}$.
		\item
	Replacing the entry $-\beta_{ji}$ at the upper left corner of $B'_{ij}$ by $\beta_i$,
		the matrix $B''_{ij}$ obtained satisfies the condition of the lemma.
	\end{itemize}
If $i>j$, we perform $j-1$ operations of switching rows and $i-2$ operations 
	of switching columns. After $i+j-3$ operations of switching rows and columns, $B_{ij}$ also 
	becomes a matrix $B'_{ij}$ satisfying the above three conditions. Now we compute $\det B'_{ij}$ 
	using Laplace expansion on the first row. The first row of $\adj(B'_{ij})$ is the same as that 
	of $\adj(B''_{ij})$. By the induction hypothesis, the entries of the first row of $\adj(B''_{ij})$ are 
	non-negative. Since the first row of $B'_{ij}$ consists of $\{-\beta_{j\ell}\}_{\ell\neq j}$, Laplace 
	expansion shows $\det B'_{ij}\leq 0$. Taking the negative signs from switching rows and 
	columns into account, $\det B'_{ij}=(-1)^{i+j-3}\det B_{ij}$. Hence
	the entry $(-1)^{i+j}\det B_{ij}$ of $\adj(B)$ is non-negative.
	
	To prove $\det B\geq 0$, we may replace $B$ by the matrix obtained by multiplying the $i$-th 
	column by $s_i$ for all $i$. In other words, we may assume that $s_1=\cdots=s_n=1$.
	Now, adding all other columns to the first column of $B$, we obtain a matrix whose entries in 
	the first column are all non-negative. To compute the determinant of the new matrix, we use
	Laplace expansion on the first column. Since all entries of
	$\adj(B)$ are non-negative, $\det B\geq 0$ as well. \qed
\end{proof}

\begin{thm}\label{thm:main}
	A flat numerical semigroup algebra $R'/R$ is a complete intersection, if its Ap\'{e}ry monomials
	form a non-singular rectangle.
\end{thm}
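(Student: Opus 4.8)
The plan is to realize $R'$ as a quotient of $R[\![\bY]\!]=R[\![Y_1,\ldots,Y_n]\!]$ by the ideal generated by the $n$ elements
\[
F_i:=Y_i^{\beta_i}-\bu^{t_i}Y_1^{\beta_{i1}}\cdots Y_n^{\beta_{in}},\qquad i=1,\ldots,n,
\]
coming from the unique factorizations $\bu^{s_i\beta_i}=\bu^{t_i}(\bu^{s_1})^{\beta_{i1}}\cdots(\bu^{s_n})^{\beta_{in}}$ guaranteed by flatness and the rectangle structure. Here $\hat\pi\colon R[\![\bY]\!]\to R'$ sends $Y_i\mapsto\bu^{s_i}$; it is surjective because $\bu^{s_1},\ldots,\bu^{s_n}$ are the minimal monomials of $R'/R$, so $R'=R[\![\bu^{s_1},\ldots,\bu^{s_n}]\!]$. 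Each $F_i$ clearly lies in $\ker\hat\pi$. By the discussion preceding the theorem, it suffices to prove $\ker\hat\pi=(F_1,\ldots,F_n)$; since the kernel always needs at least $n$ generators, producing $n$ generators is exactly the complete intersection property.

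First I would show that the quotient $A:=R[\![\bY]\!]/(F_1,\ldots,F_n)$ is spanned as an $R$-module by the images of the ``rectangular'' monomials $Y_1^{\ell_1}\cdots Y_n^{\ell_n}$ with $0\le\ell_i<\beta_i$. This is a rewriting argument: the relation $F_i=0$ lets one replace $Y_i^{\beta_i}$ by $\bu^{t_i}Y_1^{\beta_{i1}}\cdots Y_n^{\beta_{in}}$, which strictly decreases the exponent of $Y_i$ while the other exponents $\beta_{ij}$ are all $<\beta_j$ but may be positive. To see this process terminates I would use the hypothesis that the rectangle is non-singular together with Lemma~\ref{lem:matrix}: the relation $\log_\bY\bZ\,(s_1,\ldots,s_n)^{\mathsf T}=(t_1,\ldots,t_n)^{\mathsf T}$ of (\ref{eq:log}) shows $\beta_is_i\ge\sum_{j\ne i}\beta_{ij}s_j$ (the right side equals $\beta_is_i-t_i\le\beta_is_i$ since $t_i\ge 0$), and $\beta_j>\beta_{ij}\ge 0$ holds by construction of the factorizations; hence $\det(\log_\bY\bZ)$ and all entries of its adjoint are non-negative, so $\log_\bY\bZ$ has a non-negative inverse. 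Consequently the linear form $\bu\mapsto$ (the $\bu$-degree $\sum\ell_js_j$ attached to $Y_1^{\ell_1}\cdots Y_n^{\ell_n}$) together with a suitably chosen monomial order makes the rewriting strictly decreasing: applying $F_i$ sends a monomial of $\bu$-degree $\delta$ to monomials of $\bu$-degree $\delta-t_i\le\delta$, with equality only when $t_i=0$, and in that borderline case the non-negativity of the inverse of $\log_\bY\bZ$ lets one set up a monomial order (e.g.\ compare the vector $(\ell_1,\ldots,\ell_n)$ in the basis of row vectors of $\log_\bY\bZ$) that still decreases. This shows the $\beta_1\cdots\beta_n$ rectangular monomials generate $A$ over $R$.

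Next I would compare with $R'$. By Proposition~\ref{flat} (or rather the paragraph after it, using that the rectangle has size $\beta_1\times\cdots\times\beta_n$), flatness of $R'/R$ means exactly that the Ap\'ery monomials $(\bu^{s_1})^{\ell_1}\cdots(\bu^{s_n})^{\ell_n}$, $0\le\ell_i<\beta_i$, form an $R$-module \emph{basis} of $R'$: they are $\beta_1\cdots\beta_n$ in number, they span by the rectangle axioms, and the unique-representation criterion of Lemma~\ref{lem:ur} gives their $R$-linear independence. Thus $R'$ is $R$-free of rank $\beta_1\cdots\beta_n$. The surjection $A\twoheadrightarrow R'$ (induced by $\hat\pi$, which kills each $F_i$) is an $R$-module map from a module generated by $\le\beta_1\cdots\beta_n$ elements onto a free module of rank $\beta_1\cdots\beta_n$. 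A surjection of finite $R$-modules onto a free module of equal (finite generation rank $\ge$) rank is forced to be an isomorphism — concretely, lift the basis, get a surjective square matrix over the local ring $R$ whose reduction mod the maximal ideal is surjective hence invertible, so the map is injective. Therefore $\ker\hat\pi=(F_1,\ldots,F_n)$, and $R'/R$ is a complete intersection.

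The main obstacle is the termination/spanning step: showing that the rewriting using the $F_i$ actually reduces every monomial in $\bY$ to a finite $R$-combination of rectangular monomials. The difficulty is that a single application of $F_i$ lowers the $Y_i$-exponent but can raise other $Y_j$-exponents (whenever $\beta_{ij}>0$), so a naive induction on a single exponent fails; one genuinely needs a well-chosen weight or monomial order, and this is precisely where the non-singularity hypothesis and Lemma~\ref{lem:matrix} are indispensable — the non-negative inverse of $\log_\bY\bZ$ is what guarantees the existence of such an order (equivalently, that the $\bu$-grading is respected and the residual ambiguity when $t_i=0$ is controllable). Once this is in place, the passage to $R'$ and the rank count are routine local-algebra facts. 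I would also double-check that the edge cases ($n=1$, or some $\beta_i=1$, i.e.\ $Y_i$ not actually minimal — which cannot happen here) are consistent with the setup, but these do not affect the argument.
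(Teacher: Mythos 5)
Your proposal is correct and takes essentially the same route as the paper: the same $n$ relations $F_i$, termination of the rewriting controlled by the non-negative inverse of $\log_\bY\bZ$ via Lemma~\ref{lem:matrix} (the paper's norm $\|\log\bY^{\bi}\|$, the sum of coordinates in the row-vector basis, is precisely the order you sketch and drops by exactly $1$ at each step), and flatness supplying the Ap\'{e}ry $R$-basis of $R'$. The only cosmetic differences are that the paper performs the reduction in the polynomial ring $R[Y_1,\ldots,Y_n]$ and then passes to $\ker\hat{\pi}$, thereby avoiding the convergence point you gloss over when rewriting arbitrary power series in $R[\![\bY]\!]$, and it identifies $\ker\pi=(f_1,\ldots,f_n)$ directly against the Ap\'{e}ry basis instead of via your Nakayama-type rank count for the surjection $A\twoheadrightarrow R'$.
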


\begin{proof}
	Let $\beta_1\times\cdots\times\beta_n$ be the size of the non-singular rectangle.
	Consider the local $R$-algebra homomorphism $\hat{\pi}\colon R[\![Y_1,\ldots,Y_n]\!]\to R'$,
	where $Y_\ell$ maps to $\bu^{s_\ell}$ for $1\leq\ell\leq n$. The restriction 
	$\pi\colon R[Y_1,\ldots,Y_n]\to R'$ of $\hat{\pi}$ is surjective. Since $\ker\hat{\pi}$ is 
	generated by $\ker\pi$, it suffices to show that $\ker\pi$ is generated by $n$ elements.
	
	We claim that $f_\ell:=Y_\ell^{\beta_\ell}-\bu^{t_\ell}Y_1^{\beta_{\ell 1}}\cdots Y_n^{\beta_{\ell n}}$
	generate $\ker\pi$, where $\bu^{t_\ell}\in R$ and $0\leq\beta_{\ell i}<\beta_i$ for all $i$.
	Let $\bY$ and $\bZ$ be as in (\ref{def:log}). Since $\log_\bY\bZ$ is invertible, 
	we may associate a non-negative number to a monomial $\bY^\bi=Y_1^{i_1}\cdots Y_n^{i_n}$. 
	Let $(j_1,\ldots,j_n)\in\Q^n$ be the vector given by $(j_1,\ldots,j_n)\log_\bY\bZ=(i_1,\ldots,i_n)$. 
	By Lemma~\ref{lem:matrix}, all $j_\ell$ are non-negative. We define
	\[
	\|\log\bY^\bi\|:=j_1+\cdots+j_n.
	\]
	If $i_\ell\geq\beta_\ell$ for some $\ell$, we replace the factor $Y_\ell^{\beta_\ell}$ of 
	$\bY^\bi$ by $\bu^{t_\ell}Y_1^{\beta_{\ell 1}}\cdots Y_n^{\beta_{\ell n}}$ resulting
	$\bu^{t_\ell}\bY^\bi/Z_\ell$. Then
	\[
	\bY^\bi-\bu^{t_\ell}\bY^\bi/Z_\ell=f_\ell \bY^\bi/Y_\ell^{\beta_\ell}\in\langle f_1,\ldots,f_n\rangle.
	\]
	In the logarithmic form, the operation
	subtracts the $\ell$-th row of $\log_\bY\bZ$ from the vector $\bi:=(i_1,\ldots,i_n)$. Hence
	\[
	\|\log(\bu^{t_\ell}\bY^\bi/Z_\ell)\|=\|\log\bY^\bi\|-1.
	\]
	After finitely many such operations, every monomial $Y_1^{i_1}\cdots Y_n^{i_n}$ can be 
	changed so that the condition $i_\ell<\beta_\ell$ is satisfied for each $\ell$. Applying
	these operations, every element of $R[Y_1,\ldots,Y_n]$ can be written as the sum of an 
	element of $\langle f_1,\ldots,f_n\rangle$ and an $R$-linear combination of monomials 
	$Y_1^{i_1}\cdots Y_n^{i_n}$ satisfying $i_\ell<\beta_\ell$. By flatness and the rectangular 
	property, $f_1,\ldots,f_n$ generate $\ker\pi$.  \qed
\end{proof}

If $S$ is a free numerical semigroup, we can find a minimal generator $s$ such that
the Ap\'{e}ry monomials of $\kappa[\![\bu^S]\!]/\kappa[\![\bu^{s}]\!]$ form a non-singular rectangle.
The next example is observed already in \cite[Lemma~3]{W}.
\begin{example}
Let $a$ be an odd positive integer. With  relations
	\[\begin{pmatrix}
	2& -1 & 0 & \cdots & 0 & 0\\
	0& 2 & -1 & \cdots & 0 & 0\\
	0& 0 & 2 & \cdots & 0 & 0\\
	\vdots& \vdots & \vdots & \ddots & \vdots & \vdots\\
	0& 0 & 0 & \cdots & 2 & -1\\
	0& 0 & 0 & \cdots & 0 & 2
	\end{pmatrix}
	\begin{pmatrix}
	2^n+a\\ 2^n+2a\\ 2^n+4a\\ \vdots\\ 2^n+2^{n-2}a\\2^n+2^{n-1}a
	\end{pmatrix}
	=
	\begin{pmatrix}
	2^n\\ 2^n\\ 2^n\\ \vdots\\ 2^n\\ 2^n(a+2)
	\end{pmatrix},
	\]
	the flat rectangular algebra
	$\kappa[\![\bu^{2^n},\bu^{2^n+a},\ldots,\bu^{2^n+2^{n-1}a}]\!]/\kappa[\![\bu^{2^n}]\!]$
	is a complete intersection.
\end{example}

Let $S$ and $T$ be numerical semigroups generated by integers. If $\kappa[\![\bu^S]\!]/\kappa[\![\bu^p]\!]$
and $\kappa[\![\bu^T]\!]/\kappa[\![\bu^q]\!]$ are rectangular for some relatively prime numbers
$p\in S$ and $q\in T$, then $\kappa[\![\bu^{qS+pT}]\!]/\kappa[\![\bu^{pq}]\!]$ is rectangular.
Indeed, 
\[
\begin{array}{l}
\Apr(\kappa[\![\bu^{qS+pT}]\!]/\kappa[\![\bu^{pq}]\!])=\\
\\
\{\bu^{qw_1+pw_2} \,|\, \bu^{w_1}\in
\Apr(\kappa[\![\bu^S]\!]/\kappa[\![\bu^p]\!]) \text{ and } \bu^{w_2}\in\Apr(\kappa[\![\bu^T]\!]/\kappa[\![\bu^q]\!])\}.
\end{array}
\]
See \cite{HK} and also \cite[Proposition~9.11]{GR} for the case of gluing. 
Given rectangles of $\kappa[\![\bu^S]\!]/\kappa[\![\bu^p]\!]$ and 
$\kappa[\![\bu^T]\!]/\kappa[\![\bu^q]\!]$ with matrices $\log_{\bY_1}{\bZ_1}$ and 
$\log_{\bY_2}{\bZ_2}$, the algebra $\kappa[\![\bu^{qS+pT}]\!]/\kappa[\![\bu^{pq}]\!]$
has a rectangle with the matrix 
\[
\log_\bY\bZ=
\begin{pmatrix}\log_{\bY_1}{\bZ_1} & 0\\ 0 & \log_{\bY_2}{\bZ_2}\end{pmatrix}.
\]
Clearly, $\log_\bY\bZ$ is invertible if and only if $\log_{\bY_1}{\bZ_1}$ and $\log_{\bY_2}{\bZ_2}$ 
are invertible. In the absolute case, the existence of rectangles depends on the choice of a Noether 
normalization.  It is possible that $\kappa[\![\bu^{qS+pT}]\!]/\kappa[\![\bu^r]\!]$ is not 
rectangular for any $r\in qS+pT$, even though $\kappa[\![\bu^S]\!]/\kappa[\![\bu^s]\!]$ and 
$\kappa[\![\bu^T]\!]/\kappa[\![\bu^t]\!]$ are rectangular for some $s\in S$ and $t\in T$.

\begin{example}
	Let $S=\langle 2,3\rangle$ and $T=\langle 3,4\rangle$. The algebras 
	$\kappa[\![\bu^S]\!]/\kappa[\![\bu^{12}]\!]$ and
	$\kappa[\![\bu^T]\!]/\kappa[\![\bu^{24}]\!]$ have rectangles
	$\{1,\bu^2,\bu^4\}\times\{1,\bu^3,\bu^6,\bu^9\}$ and
	$\{1,\bu^4,\bu^8\}\times\{1,\bu^3,\bu^6,\bu^9,\bu^{12},\bu^{15},\bu^{18},\bu^{21}\}$, 
	respectively. To see that 
	$\kappa[\![\bu^{7S+5T}]\!]/\kappa[\![\bu^r]\!]$ is not rectangular for any $r\in 7S+5T$, we 
	observe the relation $14+21=15+20$ in $7S+5T=\langle 14,21,15,20\rangle$. If
	$r\in\{14,15,20,21\}$, the product of two minimal monomials is not Ap\'{e}ry. If
	$r\not\in\{14,15,20,21\}$, the product of two minimal monomials equals the product
	of the other two minimal monomials. Both cases can not happen for a rectangle.
\end{example}

To provide more examples, we present a class of flat rectangular algebras.
\begin{prop}
	Let $a$, $b$ and $4$ be integers with greatest common divisor $1$.
	Assume that $\bu^a$ and $\bu^b$ are the minimal monomials of the algebra $\3uRing{4}{a}{b}/\uRing{4}$.
	Then the algebra is rectangular if and only if one of  $a$ or $b$ is even.
\end{prop}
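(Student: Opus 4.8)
The plan is to analyze the Apéry monomials of $\3uRing{4}{a}{b}/\uRing{4}$ directly, counting them and checking the rectangle conditions, and then invoke Proposition~\ref{flat} (the count criterion for flatness) together with the structure of the set. Since $\gcd(4,a,b)=1$ and $\log_\bu R=4\N$ has greatest common divisor $4$, a flat algebra must have exactly $4$ Apéry monomials, so the target is to show that the Apéry set is a $2\times 2$ rectangle $\{1,\bu^a\}\times\{1,\bu^b\}=\{1,\bu^a,\bu^b,\bu^{a+b}\}$ precisely when one of $a,b$ is even. First I would record the basic reductions: among $a,b$ at least one is odd (else $2\mid\gcd(4,a,b)$), and we are told $\bu^a,\bu^b$ are the minimal monomials, so $a\ne b$ and neither of $a,b$ lies in the submonoid generated by $4$ and the other.

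Next I would treat the case analysis by parity. If exactly one of $a,b$ is even, say $b$ even and $a$ odd, I would show the four monomials $1,\bu^a,\bu^b,\bu^{a+b}$ are Apéry and pairwise in distinct residue classes modulo $4$: the exponents $0,a,b,a+b$ are $0,a,b,a+b\pmod 4$, and since $a$ is odd these four values exhaust the classes $\{0,2\}$ (from $0,b$, as $b\equiv 2$) and $\{a,a+b\}$ which are the two odd classes; one must check $a\not\equiv a+b$, i.e. $b\not\equiv 0\pmod 4$, which requires $b\equiv 2\pmod 4$ — and indeed if $b\equiv 0\pmod 4$ then $\gcd(4,b)\ge 4$ forces $a$ to carry the full generation, but then $\bu^a$ alone generates, contradicting that $\bu^b$ is a minimal monomial; so $b\equiv 2\pmod 4$. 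Then each residue class contains exactly one of these four Apéry numbers, there are exactly $4$ Apéry monomials, so by Proposition~\ref{flat} the algebra is flat, and by construction the Apéry set is the claimed $2\times 2$ rectangle. If instead both $a$ and $b$ are odd, then $a+b$ is even; I would exhibit a collision forcing non-flatness: since $a\equiv b\pmod 2$ but we need to produce two Apéry numbers in the same class mod $4$, note $0$ and $a+b$ are both even, and $a+b\not\equiv 0\pmod 4$ is impossible to guarantee, so instead compare $a$ and $b$: they are both odd. If $a\equiv b\pmod 4$, then $\bu^a$ and $\bu^b$ are two distinct Apéry monomials (neither divisible by $\bu^4$, as $0<a,b<$ their sums and minimality) in the same class, giving more than $4$ Apéry monomials, hence non-flat by Proposition~\ref{flat}; if $a\not\equiv b\pmod 4$ then $\{a,b\}$ are the two odd classes and $\{0,a+b\}$ should be the two even classes, but $a+b\equiv a+b$; here $a+b\equiv 0\pmod 4$ would make $\bu^{a+b}$ a second Apéry monomial congruent to $0$, again non-flat, while $a+b\equiv 2\pmod 4$ means $a+b$ lies in the class of neither $0$ nor the odd classes — wait, that is the remaining class $2$, so tentatively $\{1,\bu^a,\bu^b,\bu^{a+b}\}$ still occupies all four classes; the genuine obstruction in this subcase is that $\bu^{a+b}$ may fail to be Apéry or, more robustly, that $2a$ and $2b$ are even and one checks $2a\equiv 2b\pmod 4$ (since $a,b$ odd, $2a\equiv 2\equiv 2b$), producing a relation among even exponents that, combined with minimality of $\bu^a,\bu^b$, yields two representations of a single monomial and hence non-flatness.

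The main obstacle I anticipate is the "both odd" direction: unlike the clean residue-counting argument when one generator is even, here I expect to need the finer divisibility input that $\bu^a,\bu^b$ are \emph{minimal} monomials (not merely generators) to rule out the a priori possibility of a $2\times 2$ rectangle. The cleanest route is probably to show $2a\in\langle 4,a,b\rangle$ and $2b\in\langle 4,a,b\rangle$ give the factorization data $\bu^{2a}=\bu^{t}(\bu^a)^0(\bu^b)^0\cdot(\dots)$; more precisely, since $a,b$ are odd, $a+b$ is even, so $\bu^{a+b}$ has a representation $\bu^{a+b}=\bu^{s_0}\bu^w$ with $\bu^{s_0}\in\uRing 4$ nontrivial unless $a+b\in\{0,a,b\}$ mod the Apéry structure — and one shows $\bu^{a+b}$ admits \emph{both} the factorization $(\bu^a)(\bu^b)$ and a representation through a coefficient times a smaller Apéry monomial, contradicting Lemma~\ref{lem:ur}. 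Assembling these pieces: one parity even $\Rightarrow$ exactly four Apéry monomials forming the $2\times2$ rectangle $\Rightarrow$ flat and rectangular; both odd $\Rightarrow$ a repeated representation $\Rightarrow$ not flat, and in particular the rectangle (which would force flatness plus the product condition) fails. I would close by remarking that the "only if" really uses flatness inside the definition of rectangular, so non-flat immediately kills rectangularity.
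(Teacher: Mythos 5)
Your overall reduction (identify the four Ap\'{e}ry monomials and decide whether they form $\{1,\bu^a\}\times\{1,\bu^b\}$) is reasonable, but both directions as sketched have genuine gaps. In the ``one even'' direction, the step you treat as routine is exactly the missing one: you never prove that $\bu^{a+b}$ is Ap\'{e}ry. Knowing that $0,a,b,a+b$ occupy the four residue classes modulo $4$ does not do this, because the Ap\'{e}ry monomial in the class of $a+b$ is the \emph{smallest} element of $\langle 4,a,b\rangle$ in that class, and a priori that could be a competitor such as $3a$ (note $3a\equiv a+b\pmod 4$ when $a$ is odd and $b\equiv 2\pmod 4$); in that event $\bu^{a+b}$ would not be Ap\'{e}ry and the Ap\'{e}ry set would not be the rectangle. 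Ruling this out requires precisely the divisibility/minimality input you postpone to the other case: if $a+b-4\in\langle 4,a,b\rangle$, write $a+b=4r+sa+tb$ with $r\geq 1$; comparing sizes, $s$ and $t$ cannot both be positive, and if say $s\geq 1$ then $b=4r+(s-1)a$ makes $\bu^b$ non-Ap\'{e}ry, contradicting minimality; hence $s=t=0$ and $a+b=4r$ is even, impossible when exactly one of $a,b$ is even. This is the paper's argument; without it, your inference ``there are exactly $4$ Ap\'{e}ry monomials, so by construction the Ap\'{e}ry set is the claimed $2\times 2$ rectangle'' is circular.

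The ``both odd'' direction is not merely incomplete but aimed at a false target. The algebra $\3uRing{4}{a}{b}/\uRing{4}$ is flat for \emph{all} admissible $a,b$: since $\gcd(4,a,b)=1$, every residue class modulo $4$ meets $\langle 4,a,b\rangle$ and contains exactly one Ap\'{e}ry number (its smallest element), so Proposition~\ref{flat} gives flatness; e.g.\ $a=3$, $b=5$ is flat with Ap\'{e}ry set $\{1,\bu^3,\bu^5,\bu^6\}$, and it is non-rectangular while flat. So no ``collision forcing non-flatness'' exists, and the specific steps fail: $a\equiv b\pmod 4$ cannot occur at all (the larger exponent would then not be Ap\'{e}ry, contradicting minimality); if $a+b\equiv 0\pmod 4$ then $\bu^{a+b}$ is \emph{not} Ap\'{e}ry, rather than being a second Ap\'{e}ry monomial in class $0$; and $2a\equiv 2b\pmod 4$ only yields representations of two different monomials, not two representations of one monomial, so Lemma~\ref{lem:ur} is untouched. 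Your closing remark is also incorrect: flatness is not part of the definition of rectangular (Example~\ref{ex:35} is a non-flat rectangular algebra), so non-flatness, even if it held, would not kill rectangularity. A correct route close to your residue analysis: since $\bu^a,\bu^b$ are Ap\'{e}ry, $a\not\equiv b\pmod 4$; if both are odd this forces $a+b\equiv 0\pmod 4$, so $a+b-4$ is a positive multiple of $4$ and lies in $\langle 4,a,b\rangle$, whence $\bu^{a+b}$ is not Ap\'{e}ry and the Ap\'{e}ry set cannot be $\{1,\bu^a\}\times\{1,\bu^b\}$. The paper argues this direction instead from $\bu^{2a}$ not being Ap\'{e}ry, which gives $2a=4r+sb$ with $s\leq 1$ and hence one of $a,b$ even.
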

\begin{proof}
	If the algebra is rectangular, then its Ap\'{e}ry monomials are $1,\bu^a,\bu^b$ and $\bu^{a+b}$.
	Assume $a<b$. Since $\bu^a$ and $\bu^b$ are minimal, the monomial $\bu^{2a}$ is not
	Ap\'{e}ry. So $2a=4r+sb$ for some non-negative integers $r$ and $s$. Then $s\leq 1$.
	If $s=0$, then $a=2r$ is even. If $s=1$, then $b$ is even.
	
	Now assume conversely that the
	algebra is not rectangular. Then $\bu^{a+b}$ is not an Ap\'{e}ry monomial.
	Otherwise, the Ap\'{e}ry monomials form the rectangle $\{1,\bu^a\}\times\{1,\bu^b\}$. So $a+b=4r+sa+tb$
	for some non-negative integers $r,s,t$. As $\bu^a$ and $\bu^b$ are minimal, we have $s=t=0$.
	So $a+b$ is even. Since $a$ and $b$ can not be both even, they have to be both odd. \qed
\end{proof}

Next section will provide a partial answer to the following questions.
\begin{question}
Is every flat rectangular algebra a complete intersection? Assume that the
	Ap\'{e}ry monomials of a flat numerical semigroup algebra form a rectangle.
	Is the rectangle always non-singular?
\end{question}


\section{Algebras with Few Minimal Monomials}\label{sec:234}


In this section, we work on a rectangle of size $\beta_1\times\cdots\times\beta_n$ of a flat numerical semigroup algebra $R'/R$ for the cases $n=2,3,4$. Let $\bu^{s_1},\ldots,\bu^{s_n}$ 
be minimal monomials of $R'/R$. We use the notation
\[
\log_\bY\bZ
\begin{pmatrix} s_1 \\ s_2 \\ \vdots \\s_n \end{pmatrix}=
\begin{pmatrix} t_1 \\ t_2 \\ \vdots \\t_n \end{pmatrix}
\]
as in (\ref{eq:log}), where $Z_\ell=Y_\ell^{\beta_\ell}Y_1^{-\beta_{\ell 1}}\cdots Y_n^{-\beta_{\ell n}}$
and $\bu^{t_\ell}\in R$.

Consider the case $n=2$. From
\[
\begin{pmatrix} \beta_1 & -\beta_{12} \\ -\beta_{21} & \beta_2  \end{pmatrix}
\begin{pmatrix} s_1 \\ s_2  \end{pmatrix}=\begin{pmatrix} t_1 \\ t_2  \end{pmatrix},
\]
we have a relation 
\[
(\beta_1-\beta_{21})s_1+(\beta_2-\beta_{12})s_2=t_1+t_2.
\]
If the numbers $\beta_{21}$ and $\beta_{12}$ were both non-zero, the coefficient $\bu^{t_1+t_2}$
would become the Ap\'{e}ry monomial $\bu^{(\beta_1-\beta_{21})s_1}\bu^{(\beta_2-\beta_{12})s_2}$.
Therefore the matrix $\log_\bY\bZ$ is triangular for $n=2$. The main result of this section is the 
case $n=3$.

\begin{thm}\label{thm:3min}
	For a flat rectangular algebra $R[\![\bu^{s_1},\bu^{s_2},\bu^{s_3}]\!]$, the matrix
	$\log_\bY\bZ$ is triangular  after a suitable permutation of indices. In particular it is non-singular.
\end{thm}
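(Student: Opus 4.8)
Write $B:=\log_\bY\bZ$, so that $B_{ii}=\beta_i$ and $B_{ij}=-\beta_{ij}$ for $i\neq j$, where the $\beta_{ij}$ (with $\beta_{ii}=0$ and $0\le\beta_{ij}<\beta_j$) and $t_i\ge 0$ come from the rectangle factorizations $(\bu^{s_i})^{\beta_i}=\bu^{t_i}(\bu^{s_1})^{\beta_{i1}}(\bu^{s_2})^{\beta_{i2}}(\bu^{s_3})^{\beta_{i3}}$. The plan is to recast the statement graph-theoretically: let $G$ be the directed graph on $\{1,2,3\}$ with an arc $i\to j$ exactly when $\beta_{ij}>0$. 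Reordering the indices by a permutation $\pi$ makes $B$ lower triangular exactly when every arc $i\to j$ of $G$ satisfies $\pi(i)>\pi(j)$, and such a $\pi$ exists if and only if $G$ is acyclic; in that case $\det B=\beta_1\beta_2\beta_3\neq 0$, which already yields the asserted non-singularity. Since $G$ has only three vertices, acyclicity is equivalent to $G$ containing neither a directed $2$-cycle nor a directed $3$-cycle, so everything reduces to excluding these two patterns.

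\textbf{No $3$-cycle.} The one recurring tool is that a monomial $(\bu^{s_1})^{\gamma_1}(\bu^{s_2})^{\gamma_2}(\bu^{s_3})^{\gamma_3}$ with all $0\le\gamma_i<\beta_i$ is an Apéry monomial, hence is divisible by no nontrivial coefficient and, by rectangularity, admits no other factorization of this shape; in particular it can equal a coefficient $\bu^{t}\in R$ only if $t=0$ and all $\gamma_i=0$. Suppose $G$ has a $3$-cycle. Granting the absence of $2$-cycles (proved in the last paragraph), exactly one of $\beta_{ij},\beta_{ji}$ is nonzero for each pair, say $\beta_{12},\beta_{23},\beta_{31}>0$ and $\beta_{21}=\beta_{32}=\beta_{13}=0$, so each defining relation has a single off-diagonal term. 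Multiplying the three identities in the domain $R'$ and cancelling gives
\[
(\bu^{s_1})^{\beta_1-\beta_{31}}(\bu^{s_2})^{\beta_2-\beta_{12}}(\bu^{s_3})^{\beta_3-\beta_{23}}=\bu^{t_1+t_2+t_3}.
\]
Since $0<\beta_{31}<\beta_1$, $0<\beta_{12}<\beta_2$, $0<\beta_{23}<\beta_3$, the left side is a nontrivial Apéry monomial equal to an element of $R$, contradicting the tool; the reversed orientation is symmetric, so $G$ has no $3$-cycle.

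\textbf{No $2$-cycle (the crux).} Suppose $\beta_{12},\beta_{21}>0$. Adding the relations for $i=1,2$ gives $(\bu^{s_1})^{\beta_1-\beta_{21}}(\bu^{s_2})^{\beta_2-\beta_{12}}=\bu^{t_1+t_2}(\bu^{s_3})^{\beta_{13}+\beta_{23}}$; the left side is a nontrivial Apéry monomial, so $t_1=t_2=0$ and it equals $(\bu^{s_3})^{\beta_{13}+\beta_{23}}$. One then checks $\beta_3\le\beta_{13}+\beta_{23}<2\beta_3$ (the lower bound because otherwise $(\bu^{s_3})^{\beta_{13}+\beta_{23}}$ would be a rectangle monomial with third coordinate $<\beta_3$, clashing with the factorization on the left), and rewriting this power via the third relation and comparing the two factorizations forces $\beta_{13}+\beta_{23}=\beta_3$, $t_3=0$, $\beta_{31}=\beta_1-\beta_{21}$, $\beta_{32}=\beta_2-\beta_{12}$. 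One is thereby reduced to a rigid degenerate configuration: every $\beta_{ij}>0$, every $t_i=0$, every column of $B$ summing to $0$, and $B(s_1,s_2,s_3)^{T}=0$ with $s_1,s_2,s_3$ positive. This is the step I expect to be the main obstacle: with all $t_i=0$ the defining relations merely rewrite a monomial as itself, so no further monomial manipulation closes the argument and flatness must be used globally. By Proposition~\ref{flat}, after scaling so that $\log_\bu R,\log_\bu R'\subset\N$, one has $\beta_1\beta_2\beta_3=d:=\gcd(\log_\bu R)$ and the $d$ rectangle monomials represent every residue class modulo $d$ exactly once; I would finish by showing that the three column-balanced relations among $s_1,s_2,s_3$ make this impossible — for instance by exhibiting distinct triples $(\ell_i)\neq(\ell_i')$ with $0\le\ell_i,\ell_i'<\beta_i$ and $\sum_i(\ell_i-\ell_i')s_i\equiv 0\pmod d$, contradicting the bijection. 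In the extreme case $\beta_1=\beta_2=\beta_3=2$ the relations already collapse to $s_1=s_2=s_3$, which contradicts the distinctness of the minimal monomials and illustrates the mechanism; carrying this out uniformly — i.e. proving that a ``rectangular tiling'' of $\Z/d\Z$ by the arithmetic progressions $\{0,s_i,\dots,(\beta_i-1)s_i\}$ cannot coexist with these column-balanced relations — is the heart of the matter.
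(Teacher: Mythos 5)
Your reduction to excluding directed $2$- and $3$-cycles is sound, and two of the three pieces are correct: the $3$-cycle exclusion (multiplying the three defining relations to equate a rectangle monomial with strictly positive exponents, hence a nontrivial Ap\'ery monomial, with the coefficient $\bu^{t_1+t_2+t_3}$, impossible since that exponent is positive) is a legitimate shortcut past the paper's Steps 2--3, which instead prove $\det\log_\bY\bZ>0$ via Lemma~\ref{lem:matrix} and then run a maximality argument to produce a row with vanishing off-diagonal entries; and your analysis of the $2$-cycle case up to the degenerate configuration reproduces the paper's Step 1: $\beta_{12}\beta_{21}\neq 0$ forces $t_1=t_2=t_3=0$ and the column conditions $\beta_i=\beta_{ji}+\beta_{ki}$. (Minor slip: this yields $\beta_{13}+\beta_{23}=\beta_3$, not that $\beta_{13}$ and $\beta_{23}$ are individually positive.)

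Nevertheless the proposal has a genuine gap, and it sits exactly where you flag ``the heart of the matter'': you never derive a contradiction from that degenerate configuration. The residue-count strategy via Proposition~\ref{flat} is only a hope --- you would need to exhibit distinct triples $(\ell_1,\ell_2,\ell_3)\neq(\ell_1',\ell_2',\ell_3')$ with $0\le\ell_i,\ell_i'<\beta_i$ and $\sum_i(\ell_i-\ell_i')s_i$ divisible by $d$, and, as you yourself observe, the column-balanced relations only rewrite each rectangle monomial as itself, so the data at hand produce no such collision; the case $\beta_1=\beta_2=\beta_3=2$ does not indicate how to do this uniformly. The paper closes the case by a different move: leave the rectangle. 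Using the column conditions, the number $\beta_is_i+\beta_{jk}s_k$ is independent of the choice of $\{i,j,k\}=\{1,2,3\}$; writing its representation $\bu^{\beta_is_i}\bu^{\beta_{jk}s_k}=\bu^{t}\bu^{\alpha_is_i}\bu^{\alpha_js_j}\bu^{\alpha_ks_k}$ and using Ap\'ery-ness together with uniqueness of rectangle factorizations, all $\alpha$'s vanish, so both $t=\beta_3s_3+\beta_{12}s_2$ and $t'=\beta_3s_3+\beta_{21}s_1$ lie in $\log_\bu R$; then $\bu^{t'}\bu^{\beta_{12}s_2}=\bu^{t}\bu^{\beta_{21}s_1}$ gives two different representations of one monomial, contradicting flatness by Lemma~\ref{lem:ur}. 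Some argument of this kind --- playing a coefficient against two distinct Ap\'ery monomials rather than searching for a collision inside the rectangle --- is what your write-up is missing, and without it the theorem's crucial step ($\beta_{ij}\beta_{ji}=0$) is unproved.
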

\begin{proof}
	Our proof consists of three steps: (1) $\beta_{ij}\beta_{ji}=0$ for all $i\neq j$.
	(2) $t_i>0$ for some $i$. (3) $\beta_{ij}=\beta_{ik}=0$ for some $\{i,j,k\}=\{1,2,3\}$.
	Then we can change indices to that $\beta_{31}=\beta_{32}=0$. With $\beta_{12}\beta_{21}=0$,
	we may change indices again so that furthermore $\beta_{21}=0$. After these changes of indices,
	the matrix $\log_\bY\bZ$ becomes upper triangular.
	
	{\bf Step 1.}
	We show first that $\beta_{ij}\beta_{ji}=0$ for all $i\neq j$. If not, say 
	$\beta_{12}\beta_{21}\neq 0$, we claim that the conditions
	$\beta_i=\beta_{ji}+\beta_{ki}$ on columns would hold for all $\{i,j,k\}=\{1,2,3\}$.
From
	\[
	\begin{pmatrix} \beta_1 & -\beta_{12} & -\beta_{13} \\ 
	                    -\beta_{21} & \beta_2  & -\beta_{23}\\
	                     -\beta_{31} & -\beta_{32}  & \beta_3\end{pmatrix}
	\begin{pmatrix} s_1 \\ s_2  \\s_3 \end{pmatrix}=\begin{pmatrix} t_1 \\ t_2\\ t_3  \end{pmatrix},
	\]
	we have a relation
	\[
	(\beta_1-\beta_{21})s_1+(\beta_2-\beta_{12})s_2=(\beta_{23}+\beta_{13})s_3+(t_1+t_2).
	\]
	The Ap\'{e}ry monomial $\bu^{(\beta_1-\beta_{21})s_1}\bu^{(\beta_2-\beta_{12})s_2}$ 
	is divisible by the coefficient $\bu^{t_1+t_2}$. Hence $t_1=t_2=0$. If 
	$\beta_{23}+\beta_{13}<\beta_3$, we would have two different factorizations
	\[
	\bu^{(\beta_1-\beta_{21})s_1}\bu^{(\beta_2-\beta_{12})s_2}=\bu^{(\beta_{23}+\beta_{13})s_3}
	\]
	of a monomial in the rectangle. Hence $0\leq\beta_{23}+\beta_{13}-\beta_3<\beta_3$
	and we have another relation
	\[
	(\beta_1-\beta_{21})s_1+(\beta_2-\beta_{12})s_2=
	\beta_{31}s_1+\beta_{32}s_2+(\beta_{23}+\beta_{13}-\beta_3)s_3+t_3.
	\]
	The Ap\'{e}ry monomial $\bu^{(\beta_1-\beta_{21})s_1}\bu^{(\beta_2-\beta_{12})s_2}$ 
	is divisible by the coefficient $\bu^{t_3}$. Hence $t_3=0$ and 
	\[
	\bu^{(\beta_1-\beta_{21})s_1}\bu^{(\beta_2-\beta_{12})s_2}=
	\bu^{\beta_{31}s_1}\bu^{\beta_{32}s_2}\bu^{(\beta_{23}+\beta_{13}-\beta_3)s_3}.
	\]
	Monomials in the rectangle are distinct. Hence the column conditions 
	$\beta_i=\beta_{ji}+\beta_{ki}$ hold for all $\{i,j,k\}=\{1,2,3\}$.

	To get a contradiction from $\beta_{12}\beta_{21}\neq 0$, we work on elements of the form $\beta_is_i+\beta_{jk}s_k$, where $\{i,j,k\}=\{1,2,3\}$. By the column conditions, we have
	\[
	\beta_is_i+\beta_{jk}s_k=(\beta_{ji}+\beta_{ki})s_i+\beta_{jk}s_k=\beta_js_j+\beta_{ki}s_i.
	\]
	Therefore $\beta_is_i+\beta_{jk}s_k$ represents the same number for any $\{i,j,k\}=\{1,2,3\}$. Write 
	\[
	\bu^{\beta_is_i}\bu^{\beta_{jk}s_k}=\bu^t\bu^{\alpha_is_i}\bu^{\alpha_js_j}\bu^{\alpha_ks_k},
	\]
	where $0\leq\alpha_i<\beta_i$, $0\leq\alpha_j<\beta_j$, $0\leq\alpha_k<\beta_k$ and
	$\bu^t\in R$. If $\alpha_i>0$, the Ap\'{e}ry monomial 
	$\bu^{(\beta_i-\alpha_i)s_i}\bu^{\beta_{jk}s_k}$ is divisible by the coefficient $\bu^t$. Then
	$t=0$ and we would have two different factorizations 
	\[
	\bu^{(\beta_i-\alpha_i)s_i}\bu^{\beta_{jk}s_k}=\bu^{\alpha_js_j}\bu^{\alpha_ks_k}
	\]
	of a monomial in the rectangle. Hence $\alpha_i$ vanishes, and so do $\alpha_j$ and 
	$\alpha_k$. Now $\beta_is_i+\beta_{jk}s_k=t$.
	By a similar argument, we have $\beta_is_i+\beta_{kj}s_j=t'\in\log_\bu R$.
	For $i=3$, we obtain a contradiction by two different representations
	\[
	\bu^{t'}\bu^{\beta_{12}s_2}=\bu^t\bu^{\beta_{21}s_1}.
	\]

	{\bf Step 2.} Since $\beta_{12}\beta_{21}=0$, we may assume $\beta_{12}=0$ by changing 
	indices. If $\beta_{13}=0$, then
	$\det\log_\bY\bZ=\beta_1\beta_2\beta_3-\beta_1\beta_{32}\beta_{23}>0$. If $\beta_{13}\neq 0$,
	then $\beta_{31}=0$ and $\det\log_\bY\bZ$ can be computed according to vanishing of $\beta_{32}$:
	If furthermore $\beta_{32}=0$, then 
	$\det\log_\bY\bZ=\beta_1\beta_2\beta_3-\beta_{21}\beta_{12}\beta_3>0$.
	Otherwise, $\beta_{32}\neq 0$ implies that $\beta_{23}=0$ and 
	$\det\log_\bY\bZ=\beta_1\beta_2\beta_3-\beta_{21}\beta_{32}\beta_{13}>0$. 
	In any cases, $\det\log_\bY\bZ>0$. Therefore 
	\[
	\begin{vmatrix}
	\beta_1s_1-\beta_{12}s_2-\beta_{13}s_3 & -\beta_{12} &  -\beta_{13} \\
	-\beta_{21}s_1+\beta_2s_2-\beta_{23}s_3 & \beta_2 &  -\beta_{23} \\
	-\beta_{31}s_1-\beta_{32}s_2+\beta_3s_3 & -\beta_{32} & \beta_3
	\end{vmatrix}
	=s_1\det\log_\bY\bZ>0.
	\]
	Recall that the entries of the adjoint of $\log_\bY\bZ$ are all non-negative. Hence
	$t_i=\beta_is_i-\beta_{ij}s_j-\beta_{ik}s_k>0$ for some $\{i,j,k\}=\{1,2,3\}$.
	
	{\bf Step 3.} 
Provided that $\beta_{ij}s_j+\beta_{ik}s_k=\beta_is_i-t_i>0$ for all 
	$\{i,j,k\}=\{1,2,3\}$, we want to get a contradiction. After changing indices, we may assume that 
	$t_3>0$ and $\beta_{21}=0$. Let $\alpha_1$ be a positive integer in $\log_\bu R$. Then  
	$\alpha:=\alpha_1s_1\in\log_\bu R$. We choose the largest element $s_0\in\log_\bu R$ such
	that there exists a factorization 
	\[
	\bu^\alpha=\bu^{s_0}(\bu^{s_1})^{a_1}(\bu^{s_2})^{a_2}(\bu^{s_3})^{a_3}
	\]
	satisfying the condition $s_0<\alpha$. Write $a_1=n_1\beta_1+a'_1$, where $n_1\in\N$ and 
	$0\leq a'_1<\beta_1$. Let $a'_2:=a_2+n_1\beta_{12}$ and $a'_3:=a_3+n_1\beta_{13}$. 
	Then we have another factorization 
	\[
	\bu^\alpha=\bu^{s_0+n_1t_1}(\bu^{s_1})^{a'_1}(\bu^{s_2})^{a'_2}(\bu^{s_3})^{a'_3}.
	\] 
	Since $\beta_{12}s_2+\beta_{13}s_3>0$, the condition $s_0+n_1t_1<\alpha$ still holds. 
	By the maximality of $s_0$, the number $n_1t_1$ has to vanish. Write $a'_2=n_2\beta_2+a''_2$,
	where $n_2\in\N$ and $0\leq a''_2<\beta_2$. Let $a'''_3:=a''_3+n_2\beta_{23}$. Since we assume
	$\beta_{21}=0$, we have one more factorization 
	\[
	\bu^\alpha=\bu^{s_0+n_2t_2}(\bu^{s_1})^{a'_1}(\bu^{s_2})^{a''_2}(\bu^{s_3})^{a'''_3}.
	\] 
	The condition $s_0+n_2t_2<\alpha$ still holds from the assumption 
	$\beta_{21}s_1+\beta_{23}s_3>0$. By the maximality of $s_0$ again, the number $n_2t_2$ 
	has to vanish. We claim that $a'''_3<\beta_3$. Otherwise, we would have a factorization
	\[
	\bu^\alpha=\bu^{s_0+t_3}(\bu^{s_1})^{a'_1+\beta_{31}}(\bu^{s_2})^{a''_2+\beta_{32}}
	(\bu^{s_3})^{a'''_3-\beta_3}
	\] 
	contradicting the maximality of $s_0$, since we assume $\beta_{31}s_1+\beta_{32}s_2>0$
	and $t_3>0$. As an element in the rectangle,
	the monomial $(\bu^{s_1})^{a'_1}(\bu^{s_2})^{a''_2}(\bu^{s_3})^{a'''_3}$ is Ap\'{e}ry. Now we have  
	different representations  $\bu^\alpha\bu^0$ and $\bu^{s_0}\bu^{a'_1s_1+a''_2s_2+a'''_3s_3}$ 
	of $\bu^\alpha$. This can not happen in a flat algebra.	
	 \qed
\end{proof}

\begin{cor}
	A flat rectangular algebra $R[\![\bu^{s_1},\bu^{s_2},\bu^{s_3}]\!]$ is complete intersection.
\end{cor}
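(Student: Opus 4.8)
The plan is to obtain this as an immediate consequence of the two main theorems already proved. By Theorem~\ref{thm:3min}, for a flat rectangular algebra $R[\![\bu^{s_1},\bu^{s_2},\bu^{s_3}]\!]$ the matrix $\log_\bY\bZ$ becomes triangular after a suitable permutation of the indices $\{1,2,3\}$. The determinant of a triangular matrix is the product of its diagonal entries, which here are the sizes $\beta_1,\beta_2,\beta_3$ of the rectangle; since $\beta_i\geq 1$ for all $i$, we get $\det\log_\bY\bZ=\beta_1\beta_2\beta_3\neq 0$, so $\log_\bY\bZ$ is invertible and the rectangle is non-singular. This is exactly the content of the last sentence of Theorem~\ref{thm:3min}, which I would simply quote.

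Once the Apéry monomials are known to form a non-singular rectangle, I would invoke Theorem~\ref{thm:main}, which states precisely that a flat numerical semigroup algebra whose Apéry monomials form a non-singular rectangle is a complete intersection. Applying it to $R[\![\bu^{s_1},\bu^{s_2},\bu^{s_3}]\!]$ finishes the argument.

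There is essentially no obstacle: the corollary is pure bookkeeping on top of Theorem~\ref{thm:3min} (the structural input, which is the difficult step) and Theorem~\ref{thm:main} (the general mechanism). The only remark worth making explicit is that the permutation of indices appearing in Theorem~\ref{thm:3min} is harmless for the application of Theorem~\ref{thm:main}: relabelling the minimal monomials $\bu^{s_1},\bu^{s_2},\bu^{s_3}$ permutes the rows and columns of $\log_\bY\bZ$ simultaneously and changes neither the rectangular structure of $\Apr(R'/R)$ nor the non-singularity of $\log_\bY\bZ$, so Theorem~\ref{thm:main} applies to the relabelled presentation, which describes the same algebra. Hence $R[\![\bu^{s_1},\bu^{s_2},\bu^{s_3}]\!]$ is a complete intersection.
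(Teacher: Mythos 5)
Your argument is correct and is exactly the paper's intended route: the corollary is stated as an immediate consequence of Theorem~\ref{thm:3min} (whose final sentence already records that the rectangle is non-singular) combined with Theorem~\ref{thm:main}. Your extra remark that simultaneously relabelling the minimal monomials only permutes the rows and columns of $\log_\bY\bZ$, and so affects neither flatness, the rectangle, nor non-singularity, is a harmless and accurate clarification.
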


\begin{example}\label{4.4}
	In the flat algebra $\5uRing{16}{24}{31}{46}{44}/\2uRing{16}{24}$, the set of the Ap\'{e}ry monomials
	is a rectangle of size $2\times2\times2$ with the relation
	\[
	\begin{pmatrix}
	2 & -1 & 0  \\
	0& 2 &  -1 \\
	0 & 0 & 2  \\
	\end{pmatrix}
	\begin{pmatrix}31\\46\\44\end{pmatrix}=
	\begin{pmatrix}16\\2\times24\\4\times16+24\end{pmatrix}.
	\]
	The matrix $\log_\bY\bZ$ is triangular.
\end{example}
We define $\log_\bY\bZ$ only for flat rectangular algebras. The following algebra is not flat.
The corresponding $3\times 3$ matrix is singular.
\begin{example}
	The Ap\'{e}ry monomials of the algebra $\3uRing{3}{5}{7}/\2uRing{17}{19}$ are $1$,
$\bu^3$, $\bu^{18}$, $\bu^{21}$ and $\bu^s$ for $5\leq s\leq 16$. They form a
	rectangle of size $4\times2\times2$ with the relation
	\[
	\begin{pmatrix}
	4 & -1 & -1  \\
	-1& 2 &  -1 \\
	-3 & -1 & 2  \\
	\end{pmatrix}
	\begin{pmatrix}3\\5\\7\end{pmatrix}=
	\begin{pmatrix}0\\0\\0\end{pmatrix}.
	\]
\end{example}

Using a result of Bresinsky \cite{B}, we have the following result for
an algebra with $4$ minimal monomials.
\begin{thm}
Let $\bu^{s_1},\bu^{s_2},\bu^{s_3},\bu^{s_4}$ be the minimal monomials of a rectangular
	algebra $R'/R$. If $R=\kappa[\![\bu^s]\!]$ for some $s$ in the semigroup generated by $s_1,s_2,s_3,s_4$,
	then $R'/R$ is complete intersection.
\end{thm}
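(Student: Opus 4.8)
The plan is to reduce the statement to the classical situation and then invoke Bresinsky's structure theorem. Since $R=\kappa[\![\bu^s]\!]$ is a power series ring in one variable and $R'=R[\![\bu^{s_1},\bu^{s_2},\bu^{s_3},\bu^{s_4}]\!]$ is flat over $R$, the ring $R'$ is (after clearing denominators so that everything lives in $\N$) a classical numerical semigroup ring $\kappa[\![\bu^S]\!]$ with $S=\langle s,s_1,s_2,s_3,s_4\rangle$, and $R'/R$ is complete intersection exactly when $\kappa[\![\bu^S]\!]$ is a complete intersection numerical semigroup ring in the classical sense (this is the equivalence quoted in Section~\ref{sec:apr} from \cite{HK}). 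So the task becomes: a numerical semigroup minimally generated by (at most) five elements, whose Ap\'ery set with respect to $s$ forms a rectangle of size $\beta_1\times\beta_2\times\beta_3\times\beta_4$, is a complete intersection.

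First I would record what the rectangular hypothesis gives us concretely in terms of the matrix $\log_\bY\bZ$ from \eqref{eq:log}: for each $\ell$ there is a unique factorization $\bu^{s_\ell\beta_\ell}=\bu^{t_\ell}(\bu^{s_1})^{\beta_{\ell1}}\cdots(\bu^{s_4})^{\beta_{\ell4}}$ with $0\le\beta_{\ell j}<\beta_j$, $\beta_{\ell\ell}=0$, $\bu^{t_\ell}\in R$, and moreover the binomials $f_\ell:=Y_\ell^{\beta_\ell}-\bu^{t_\ell}Y_1^{\beta_{\ell1}}\cdots Y_4^{\beta_{\ell4}}$ lie in the defining ideal of $R'$ over $R$. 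By Theorem~\ref{thm:main} it suffices to show the rectangle is non-singular, i.e. that $\det\log_\bY\bZ\ne0$; equivalently, by the argument used in the $n=2,3$ cases, one wants to show $\log_\bY\bZ$ can be made triangular after permuting indices, or at least that the $f_\ell$ already generate the defining ideal. The natural tool here is Bresinsky's classification of numerical semigroups generated by four elements whose ring is a complete intersection (or, more precisely, his description of the possible shapes of a minimal binomial generating set): I would show that the rectangular relations $f_1,\ldots,f_4$ are forced into one of Bresinsky's complete-intersection patterns. The key observation driving this is the same one used repeatedly in the proof of Theorem~\ref{thm:3min}: if $\beta_{ij}\ne0$ and $\beta_{ji}\ne0$ then combining the two relations produces an Ap\'ery monomial divisible by a coefficient, a contradiction with flatness; and if a ``cycle'' $\beta_{i_1i_2},\beta_{i_2i_3},\ldots,\beta_{i_ki_1}$ of nonzero entries occurs, summing the relations around the cycle again forces all the involved $t$'s to vanish and then manufactures two distinct factorizations of a monomial in the rectangle. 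So the support of the off-diagonal part of $\log_\bY\bZ$ contains no $2$-cycles and no longer oriented cycles, which is exactly the acyclicity that makes the matrix triangularizable by a permutation.

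Carrying this out: I would let $G$ be the directed graph on vertices $\{1,2,3,4\}$ with an edge $i\to j$ whenever $\beta_{ij}\ne0$. Step one shows $G$ has no $2$-cycle (as above). Step two, adapting Step~2 and Step~3 of the proof of Theorem~\ref{thm:3min}, shows that along any directed cycle the relevant $t_\ell$ all vanish and one derives a repeated representation of some $\bu^\alpha$, contradicting Lemma~\ref{lem:ur}; hence $G$ is acyclic. An acyclic digraph admits a topological ordering of its vertices, and reordering $s_1,\ldots,s_4$ accordingly makes $\log_\bY\bZ$ upper triangular with positive diagonal entries $\beta_\ell$, so $\det\log_\bY\bZ=\beta_1\beta_2\beta_3\beta_4>0$ and the rectangle is non-singular. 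Theorem~\ref{thm:main} then finishes the argument. The main obstacle I anticipate is Step two in the four-variable case: with four indices there are more cycle types to rule out (two disjoint $2$-cycles, a $3$-cycle plus a pendant, a genuine $4$-cycle), and the bookkeeping in the ``choose the largest $s_0\in\log_\bu R$ with a short factorization of $\bu^\alpha$'' argument has to be run carefully so that each cycle-reduction step strictly decreases something while keeping the factorization proper; this is where Bresinsky's result is genuinely useful, since it pins down in advance which binomial shapes can occur in a four-generated complete intersection and lets one avoid an exhaustive case analysis by matching the rectangular pattern to his list.
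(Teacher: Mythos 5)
Your plan hinges on proving that the rectangle of a flat rectangular algebra with four minimal monomials is non-singular (indeed triangularizable), and then invoking Theorem~\ref{thm:main}. That key step is precisely what you do not carry out: you only name it as ``the main obstacle,'' and it is a genuine one. The proof of Theorem~\ref{thm:3min} does not extend mechanically to $n=4$. Step~1 there exploits the fact that adding two of the three rows isolates the single remaining variable, which is what yields the column conditions $\beta_i=\beta_{ji}+\beta_{ki}$ and ultimately the contradiction; with four variables the analogous sum leaves two variables on the right and the bookkeeping no longer closes up. Likewise, in three variables the absence of $2$-cycles already forces triangularity up to permutation, whereas in four variables you must also exclude $3$- and $4$-cycles, and the ``largest $s_0$'' argument of Step~3 uses the specific vanishing pattern $\beta_{21}=0$, $t_3>0$ available only after the earlier steps. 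Your appeal to Bresinsky to ``pin down the binomial shapes'' is not a precise argument: his Theorem~3 in \cite{B} says that a symmetric numerical semigroup generated by four elements which is \emph{not} a complete intersection admits a relation $\alpha_1s_1+\alpha_3s_3=\alpha_2s_2+\alpha_4s_4$ with $0<\alpha_i<c_i$; it does not classify binomial generating sets of complete intersections in a form that would force $\log_\bY\bZ$ to be triangular. Note also that non-singularity of rectangles is exactly what the paper leaves open in its Questions, so your route, if completed, would settle (a case of) that open problem --- a further sign that the gap is real rather than routine.

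The paper's own proof avoids non-singularity altogether and argues by contradiction at the level of the semigroup. Flatness is automatic because $R=\kappa[\![\bu^s]\!]$ with $s\in\langle s_1,\ldots,s_4\rangle$ is a Noether normalization, and a flat rectangular algebra is Gorenstein (unique maximal Ap\'ery monomial), so $\langle s_1,\ldots,s_4\rangle$ is symmetric. If the algebra were not a complete intersection, Bresinsky's relation above would exist; since monomials in the rectangle are distinct, some $\alpha_i\geq\beta_i$, say $\alpha_1\geq\beta_1$, hence $\beta_1<c_1$. Feeding $s=\sum n_js_j$ into the rectangle relation $\beta_1s_1=\beta_{12}s_2+\beta_{13}s_3+\beta_{14}s_4+\lambda_1 s$ and using the minimality of $c_1$ forces $\beta_{12}=\beta_{13}=\beta_{14}=0$, $\lambda_1>0$ and $s=n_1s_1$; then $(\bu^{s_1})^{\alpha_1}$ is not Ap\'ery, so the same argument applies to $i=2$ or $4$ and produces $\beta_2s_2=\lambda_2n_1s_1$, contradicting the minimality of $c_2$. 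If you want to salvage your approach, you would need to actually prove the acyclicity of the support digraph of $\log_\bY\bZ$ for $n=4$ (ruling out $3$- and $4$-cycles by an argument replacing Steps~1--3 of Theorem~\ref{thm:3min}), not merely assert that Bresinsky's theorem makes it unnecessary.
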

\begin{proof}
	We may assume that $s_1,s_2,s_3,s_4$ are integers with the greatest common divisor $1$.
In the rectangle of size $\beta_1\times\beta_2\times\beta_3\times\beta_4$, we have 
	relations 
	\[
	\beta_is_i=\beta_{ij}s_j+\beta_{ik}s_k+\beta_{il}s_l+\lambda_is
	\] 
	for $\{i,j,k,l\}=\{1,2,3,4\}$, where $\lambda_i\in\N$. Since flat rectangular algebras are 
	Gorenstein, the semigroup $\langle s_1,s_2,s_3,s_4\rangle$ is symmetric. If the algebra 
	is not complete intersection, then \cite[Theorem 3]{B} says
	\[
(\bu^{s_1})^{\alpha_1}(\bu^{s_3})^{\alpha_3}=(\bu^{s_2})^{\alpha_2}(\bu^{s_4})^{\alpha_4}
	\]
	for some $0<\alpha_i<c_i$, where $c_i=\min\{n \,|\, 0< ns_i\in\langle s_j ; j\neq i\rangle\}$. 
Since monomials in the rectangle are distinct, $\alpha_i\geq\beta_i$ for some $i$.
	Say $\alpha_1\geq\beta_1$. Then $\beta_1<c_1$.
	Write $s=n_1s_1+n_2s_2+n_3s_3+n_4s_4$, where $n_1,n_2,n_3,n_4\in\N$. Then
	\[
	(\beta_1-n_1\lambda_1)s_1=(\beta_{12}+n_2\lambda_1)s_2+
	(\beta_{13}+n_3\lambda_1)s_3+(\beta_{14}+n_4\lambda_1)s_4.
	\]
	By the minimality of $c_1$, the non-negative number $\beta_1-n_1\lambda_1$ 
	has to vanish. Therefore 
	$\beta_{12}=\beta_{13}=\beta_{14}=n_2\lambda_1=n_3\lambda_1=n_4\lambda_1=0$ and 
	$\lambda_1 s=\beta_1s_1=n_1\lambda_1s_1$. Consequently, $\lambda_1>0$ and $s=n_1s_1$.
	The monomial 
	$(\bu^{s_1})^{\alpha_1}=\bu^{\lambda_1 s}(\bu^{s_1})^{\alpha_1-\beta_1}$ is not Ap\'{e}ry,
	nor is $(\bu^{s_2})^{\alpha_2}(\bu^{s_4})^{\alpha_4}=(\bu^{s_1})^{\alpha_1}(\bu^{s_3})^{\alpha_3}$.
	Therefore $\alpha_i\geq\beta_i$ for $i=2$ or $4$. Say $\alpha_2\geq\beta_2$. Then 
	$\beta_2<c_2$ and $\lambda_2s=\beta_2s_2$ by the same argument as above. Now the relation
	$\beta_2s_2=\lambda_2n_1s_1$ contradicts the minimality of $c_2$.
	\qed
\end{proof}

\paragraph{\bf{Acknowledgements}}
Part of this work has been developed during the visits of I-Chiau Huang to the Institute for
Research in Fundamental Sciences (IPM) and Raheleh Jafari to the Institute of Mathematics,
Academia Sinica (IoM) in 2016. Both authors would like to thank IPM and IoM for their 
hospitality and support. The authors are grateful to Mee-Kyoung Kim for inspiring 
conversation and comments. Finally we would like to thank the referee for a careful reading 
of the manuscript and  valuable comments and suggestions.


\begin{thebibliography}{00}

	\bibitem{A}
	Luchezar~L.~Avramov.
	\newblock Locally Complete Intersection Homomorphisms and a Conjecture of Quillen on the Vanishing of Cotangent Homology.
	\newblock \em{Annals of Mathematics}, 150(2):455--487, 1999.

	\bibitem{BC}
	Jos\'e~Bertin and Philippe~Carbonne.
	\newblock Semi-groupes d'entiers et application aux branches.
	\newblock \em{J. Algebra}, 49(1):81--95, 1977.
	
	\bibitem{B}
	Henrik~Bresinsky.
	\newblock Symmetric semigroups of integers generated by {$4$} elements.
	\newblock \em{Manuscripta Math.}, 17(3):205--219, 1975.
		
	\bibitem{DMS-2013}
	Marco D'Anna, Vincenzo Micale, and Alessio Sammartano.
	\newblock When the associated graded ring of a semigroup ring is Complete
	Intersection.
	\newblock \em{J. Pure Appl. Algebra} 217(6):1007--1017, 2013. 
	
	\bibitem{DMS}
	Marco D'Anna, Vincenzo Micale, and Alessio Sammartano.
	\newblock Classes of complete intersection numerical semigroups.
	\newblock \em{Semigroup Forum}, 88(2):453--467, 2014.
		
	\bibitem{EGA-24}
    Alexander~Grothendieck. 
        	\newblock  \em{\'El\'ements de g\'eom\'etrie alg\'ebrique. {IV}. \'Etude locale des sch\'emas et des morphismes de sch\'emas. {II}}.
        	\newblock  Inst. Hautes \'Etudes Sci. Publ. Math. No. 24, 1965. 
   
	\bibitem{EGA}
	Alexander~Grothendieck.
    	\newblock  \em{\'El\'ements de g\'eom\'etrie alg\'ebrique. {IV}. \'Etude locale des sch\'emas et des morphismes de sch\'emas {IV}}.
	\newblock Inst. Hautes \'Etudes Sci. Publ. Math. No. 32, 1967. 		
		
		
	\bibitem{H2015}
	I-Chiau Huang.
	\newblock Residual complex on the tangent cone of a numerical semigroup ring.
	\newblock \em{Acta Math. Vietnam.}, 40(1):149--160, 2015.
	
	\bibitem{H2016}
	I-Chiau Huang.
	\newblock Extensions of tangent cones of monomial curves.
	\newblock \em{J.~Pure Appl. Algebra}, 220(10):3437--3449, 2016.
	
	\bibitem{HK}
	I-Chiau Huang and Mee-Kyoung Kim.
	\newblock Numerical semigroup algebras.
	\newblock Preprint, 2016.
	
	\bibitem{M}
	Hideyuki Matsumura.
	\newblock \em{Commutative Ring Theory}.
	\newblock Cambridge University Press, 1986.
	
	\bibitem{GR}
	Jos\'e Carlos Rosales and Pedro~A. Garc\'\i{a}-S\'anchez.
	\newblock \em{Numerical semigroups}, volume~20 of \em{Developments in
		Mathematics}.
	\newblock Springer, New York, 2009.
		
	\bibitem{T}
	Mohamed Taba\^a.
	\newblock Sur les homomorphismes d'intersection compl\`ete.
	\newblock \em{C.~R.~Acad. Sci. Paris S\'er. I Math.}, 298(18):437--439, 1984.

	\bibitem{W}
	Keiichi Watanabe.
	\newblock Some examples of one dimensional {G}orenstein domains.
	\newblock \em{Nagoya Math. J.}, 49:101--109, 1973.
	
\end{thebibliography}
\end{document}